\documentclass[12pt,a4paper]{amsart}

\usepackage[a4paper,hmargin=32mm]{geometry}

\usepackage{mathtools}
\usepackage{libertinus-otf}
\usepackage{microtype}
\usepackage{booktabs}

\usepackage{algorithm}
\usepackage{algpseudocode}

\usepackage[hidelinks]{hyperref}

\numberwithin{equation}{section}

\newtheorem{theorem}{Theorem}[section]
\newtheorem{lemma}[theorem]{Lemma}
\newtheorem{proposition}[theorem]{Proposition}
\newtheorem{corollary}[theorem]{Corollary}

\newcounter{lettertheorem}

\newtheorem{theoremletter}[lettertheorem]{Theorem}

\newtheorem{corollaryletter}[lettertheorem]{Corollary}

\theoremstyle{definition}

\newtheorem{example}[theorem]{Example}

\theoremstyle{remark}
\newtheorem{remark}[theorem]{Remark}

\usepackage{xcolor}
\usepackage{tikz}
\usepackage{tikz-cd}
\usetikzlibrary{calc}

\usepackage{svg}

\colorlet{scolor}{red}
\colorlet{tcolor}{blue}
\definecolor{ucolor}{RGB}{100,200,80}

\newcommand{\reds}{\ensuremath{\mathord{{\color{scolor}s}}}}
\newcommand{\bluet}{\ensuremath{\mathord{{\color{tcolor}t}}}}
\newcommand{\greenu}{\ensuremath{\mathord{{\color{ucolor}u}}}}

\newcommand{\alphas}{\ensuremath{{\color{scolor}\alpha_s}}}
\newcommand{\alphat}{\ensuremath{{\color{tcolor}\alpha_t}}}
\newcommand{\alphau}{\ensuremath{{\color{ucolor}\alpha_u}}}
\newcommand{\alphax}{\ensuremath{{\color{genericcolor}\alpha_x}}}

\tikzset{
  soergel/.style={
    line cap=round,
    line join=round
  },
  boundary/.style={
    draw=black,
    dashed,
    line width=0.35pt
  },
  strand/.style={
    line width=0.8pt,
    line cap=round,
    line join=round
  },
  sline/.style={
    strand,
    draw=scolor
  },
  tline/.style={
    strand,
    draw=tcolor
  },
  uline/.style={
    strand,
    draw=ucolor
  },
  sdot/.style={
    circle,
    fill=scolor,
    draw=scolor,
    inner sep=0pt,
    minimum size=1.1mm
  },
  tdot/.style={
    circle,
    fill=tcolor,
    draw=tcolor,
    inner sep=0pt,
    minimum size=1.1mm
  },
  udot/.style={
    circle,
    fill=ucolor,
    draw=ucolor,
    inner sep=0pt,
    minimum size=1.1mm
  },
  morphism box/.style={
    rectangle,
    draw=black,
    fill=white,
    line width=0.4pt,
    inner xsep=5pt,
    inner ysep=3pt
  }
}

\definecolor{genericcolor}{rgb}{0.55,0.20,0.65}

\newcommand{\anyx}{%
  \ensuremath{\mathord{{\color{genericcolor}x}}}%
}

\tikzset{
  genericline/.style={
    strand,
    draw=genericcolor
  },
  genericdot/.style={
    circle,
    fill=genericcolor,
    draw=genericcolor,
    inner sep=0pt,
    minimum size=1.1mm
  }
}

\newcommand{\dotStart}{%
\begin{tikzpicture}[
  soergel,
  x=.65cm,
  y=.65cm,
  baseline=-.6ex
]
  \draw[boundary] (-1,-1)--(1,-1);
  \draw[boundary] (-1, 1)--(1, 1);

  \draw[genericline] (0,-1)--(0,0);
  \node[genericdot] at (0,0) {};
\end{tikzpicture}%
}

\newcommand{\dotFinish}{%
\begin{tikzpicture}[
  soergel,
  x=.65cm,
  y=.65cm,
  baseline=-.6ex
]
  \draw[boundary] (-1,-1)--(1,-1);
  \draw[boundary] (-1, 1)--(1, 1);

  \draw[genericline] (0,0)--(0,1);
  \node[genericdot] at (0,0) {};
\end{tikzpicture}%
}

\newcommand{\mergeVertex}{%
\begin{tikzpicture}[
  soergel,
  x=.65cm,
  y=.65cm,
  baseline=-.6ex
]
  \draw[boundary] (-1,-1)--(1,-1);
  \draw[boundary] (-1, 1)--(1, 1);

  \draw[genericline] (-.55,-1)--(0,0);
  \draw[genericline] ( .55,-1)--(0,0);
  \draw[genericline] (0,0)--(0,1);
\end{tikzpicture}%
}

\newcommand{\splitVertex}{%
\begin{tikzpicture}[
  soergel,
  x=.65cm,
  y=.65cm,
  baseline=-.6ex
]
  \draw[boundary] (-1,-1)--(1,-1);
  \draw[boundary] (-1, 1)--(1, 1);

  \draw[genericline] (0,-1)--(0,0);
  \draw[genericline] (0,0)--(-.55,1);
  \draw[genericline] (0,0)--( .55,1);
\end{tikzpicture}%
}

\newcommand{\vertexSU}{%
\begin{tikzpicture}[
  soergel,
  x=.70cm,
  y=.70cm,
  baseline=-.6ex
]
  \draw[boundary] (-1.2,-1)--(1.2,-1);
  \draw[boundary] (-1.2, 1)--(1.2, 1);

  \draw[sline] (-.45,-1)--(0,0);
  \draw[uline] ( .45,-1)--(0,0);

  \draw[uline] (0,0)--(-.45,1);
  \draw[sline] (0,0)--( .45,1);
\end{tikzpicture}%
}

\newcommand{\vertexTU}{%
\begin{tikzpicture}[
  soergel,
  x=.70cm,
  y=.70cm,
  baseline=-.6ex
]
  \draw[boundary] (-1.5,-1)--(1.5,-1);
  \draw[boundary] (-1.5, 1)--(1.5, 1);

  \draw[tline] (-.8,-1)--(0,0);
  \draw[uline] ( 0,-1)--(0,0);
  \draw[tline] ( .8,-1)--(0,0);

  \draw[uline] (0,0)--(-.8,1);
  \draw[tline] (0,0)--(0,1);
  \draw[uline] (0,0)--(.8,1);
\end{tikzpicture}%
}

\newcommand{\vertexST}{%
\begin{tikzpicture}[
  soergel,
  x=.70cm,
  y=.70cm,
  baseline=-.6ex
]
  \draw[boundary] (-2.25,-1)--(2.25,-1);
  \draw[boundary] (-2.25, 1)--(2.25, 1);

  \draw[sline] (-1.6,-1)--(0,0);
  \draw[tline] (-.8,-1)--(0,0);
  \draw[sline] (0,-1)--(0,0);
  \draw[tline] (.8,-1)--(0,0);
  \draw[sline] (1.6,-1)--(0,0);

  \draw[tline] (0,0)--(-1.6,1);
  \draw[sline] (0,0)--(-.8,1);
  \draw[tline] (0,0)--(0,1);
  \draw[sline] (0,0)--(.8,1);
  \draw[tline] (0,0)--(1.6,1);
\end{tikzpicture}%
}
\newcommand{\polynomialBox}[1]{%
\begin{tikzpicture}[
  soergel,
  x=.65cm,
  y=.65cm,
  baseline=-.6ex
]
  \draw[boundary] (-1,-1)--(1,-1);
  \draw[boundary] (-1, 1)--(1, 1);

  \node[morphism box] at (0,0) {$#1$};
\end{tikzpicture}%
}

\input{double-leaf-morphism.tex}

\newcommand{\exampleDecoratedDoubleLeafPicture}{%
\begin{tikzpicture}[baseline=(pic.center)]
  \node[inner sep=0pt, outer sep=0pt] (pic)
    {\exampleDoubleLeafPicture};
  \node[
    morphism box,
    anchor=east,
    inner xsep=3pt,
    inner ysep=2pt
  ]
  at ($(pic.center)+(-0.85cm,-0.45cm)$)
  {$
    \begin{gathered}
      (-20-29\varphi)\alphas+
      \\(-10-17\varphi)\alphat +
      \\(1+2\varphi)\alphau
    \end{gathered}
  $};
\end{tikzpicture}%
}

\input{rex-morphisms.tex}

\newcommand{\Hec}{\mathcal{H}}

\newcommand{\Hom}{\operatorname{Hom}}

\newcommand{\expr}[1]{\mathbf{#1}}

\newcommand{\BSB}{\mathrm{BSBim}_{H_3}}

\newcommand{\QG}{\left(\Omega_{Q} H_{3}\right)_{\oplus}}

\newcommand{\CF}{\mathcal{F}}

\newcommand{\LL}{\mathbb{LL}}

\newcommand{\dft}{\operatorname{defect}}

\newcommand{\bk}{\mathbb{k}}

\newcommand{\ch}{\mathcal{h}}

\title{Explicit \(H_3\) relation in the diagrammatic Hecke category}

\author{Lev Nechitailo}
\address{Department of Mathematics, HSE, Moscow, Russia}
\email{nechitailo.lev@gmail.com}

\date{27.09.2026}

\begin{document}

\begin{abstract}
We compute the missing three-color relation in type \(H_{3}\) as an explicit \(R\)-linear combination of double leaves. The expression has \(43424\) nonzero polynomial coefficients with scalars in \(\mathbb{Z}[\varphi]\), where \(\varphi = (1 + \sqrt{5})/2\). We verify the relation using exact arithmetic in the computer algebra system GAP. This completes the diagrammatic presentation of the Hecke category, defined in \cite{EW16}.
\end{abstract}

\maketitle

\section{Introduction}

Soergel bimodules form a categorification of the Hecke algebra and provide an incarnation of the Hecke category. This category gives an algebraic framework for Kazhdan--Lusztig theory and has become a fundamental tool in geometric and modular representation theory. In \cite{EW16}, Elias and Williamson gave a diagrammatic presentation of the Hecke category by generators and relations. This presentation makes it possible to work with morphisms between Soergel bimodules entirely diagrammatically.

There is, however, one exceptional case left unresolved in this presentation. The three-color, or Zamolodchikov, relations are attached to finite rank-three parabolic subgroups. For parabolics of types \(I_{2}(m)\times A_{1},A_{3},B_{3}\), one can choose two suitable rex paths for which the corresponding rex morphisms are equal. In the remaining type \(H_{3}\), this is no longer possible. See \cite[\S 1.4.3]{EW16}. 

At the level of Soergel bimodules, the Soergel Hom formula implies that the difference must be an \(R\)-linear combination of double leaves factoring through lower terms, but the explicit combination was unknown. Consequently, the \(H_{3}\) relation was missing from the diagrammatic presentation. The purpose of this paper is to compute this relation.

Let \(\varphi = \frac{1 + \sqrt{5}}{2}\). Consider the standard geometric realization of the Coxeter system \(H_{3}\) over \(\mathbb{Q}(\varphi)\). Let \(w_{0}\) be the longest element, and fix the following two reduced expressions of it
\[\expr{x} = ut st us ts tu st st s, \qquad \expr{y} = ststsutstsutstu.\]
Put \(R = \mathbb{Q}(\varphi)[\alpha_{s},\alpha_{t},\alpha_{u}],\) and write \(\widetilde\Lambda\) for the localization functor on the diagrammatic category with only the one-color and two-color relations imposed, before the \(H_{3}\) relation is imposed. Let \[P_{L},P_{R}\colon \expr{x} \longrightarrow \expr{y}\]
be the two rex morphisms appearing in the \(H_{3}\) relation of \cite[\S 1.4.3]{EW16}. The coefficient certificate accompanying this paper defines an explicit \(R\)-linear combination \(Z_{H_{3}}\) of double leaves from \(\expr{x}\) to \(\expr{y}\).
\begin{theoremletter}\label{th:A}
  For the standard geometric realization of \(H_{3}\),
  \[\widetilde\Lambda(P_{L}) - \widetilde\Lambda(P_{R}) = \widetilde\Lambda(Z_{H_{3}}).\]
\end{theoremletter}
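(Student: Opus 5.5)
The plan is to verify Theorem~\ref{th:A} by exact computation in the localized category. Under $\widetilde\Lambda$, every morphism between Bott--Samelson objects of length $16$ becomes a $|W|\times|W|$-block matrix over $Q=\operatorname{Frac}(R)$, and it is nonzero only on the $w_0$-components. Since $\expr{x}$ and $\expr{y}$ are both reduced expressions of $w_0$, each Bott--Samelson object localizes to a direct sum of standard objects $Q_w$ indexed by subexpressions. A morphism $\expr{x}\to\expr{y}$ therefore becomes a $2^{16}\times 2^{16}$ matrix with entries in $Q$, and the entry is zero unless the two subexpressions evaluate to the same element. We will not compare matrices entrywise with no structure; instead we use the explicit localization rules of \cite{EW16} for the generators.

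For dots, trivalent vertices and polynomials, we use the one-color localization formulas. For $2m_{st}$-valent vertices, we use the formulas in the standard basis coming from the two-color calculus, i.e.\ the images of the Jones--Wenzl projectors. The combinatorics is controlled by writing each diagram as a vertical composite of elementary layers. Each layer is the identity tensored with a single generator, so its localization is a sparse matrix whose rows and columns are indexed by $01$-sequences. We then multiply these sparse matrices.

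The steps are as follows.
\begin{enumerate}
\item Write $P_L$ and $P_R$ as compositions of braid moves along the two rex paths of \cite[\S1.4.3]{EW16}, and compute $\widetilde\Lambda(P_L)-\widetilde\Lambda(P_R)$.
\item For each double leaf $\LL$ appearing in the certificate, build it from its light-leaf halves, namely the top half, the rex move through $w$, and the flipped bottom half. Compute $\widetilde\Lambda(\LL)$, multiply by its polynomial coefficient in $R$, and sum over all terms.
\item Check equality of the two resulting sparse matrices over $\mathbb{Q}(\varphi)(\alpha_s,\alpha_t,\alpha_u)$.
\end{enumerate}
All arithmetic is carried out exactly in GAP, with $\varphi$ treated as a root of $x^2-x-1$ and the $\alpha$'s as indeterminates. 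Since localization is faithful on the free $R$-modules of morphisms, by the Soergel Hom formula and \cite{EW16}, this equality is exactly the claimed statement.

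To shrink the computation, we use a simplification that needs justification. Because $\widetilde\Lambda$ is a functor, it suffices to compare the images of the standard basis element $1_{\underline e}$ on each summand $Q_{\underline e}$ of the source. Equivalently, we evaluate both sides column by column, only on subexpressions actually reached. We would also reduce to checking entries after clearing denominators: multiply by the product of roots $\prod_{\beta>0}\beta$, and compare polynomials in $R$.

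The main obstacle is scale. There are $43424$ coefficients, and the intermediate matrices for 16-strand diagrams may fill up. If memory becomes a problem, our first fallback is to specialize $\alpha_s,\alpha_t,\alpha_u$ to random values in $\mathbb{Z}[\varphi]$ modulo several large primes splitting $x^2-x-1$. We would then certify the polynomial identity by a degree bound: every entry after clearing denominators has bounded degree, so enough evaluation points force equality. Failing that, we would keep the indeterminates but process one column $\underline e$ at a time, freeing memory in between.
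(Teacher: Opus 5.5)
Your main route, computing $\widetilde\Lambda(P_{L})-\widetilde\Lambda(P_{R})$ and $\widetilde\Lambda(Z_{H_{3}})$ exactly over $\mathbb{Q}(\varphi)(\alpha_{s},\alpha_{t},\alpha_{u})$ and comparing, is logically a proof, and a different one from the paper's. The theorem is literally an equality of localized matrices, so no faithfulness or Hom-formula input is needed; the paper uses faithfulness of localization on $\mathrm{BSBim}_{H_3}$ only for the subsequent corollary. Fix the set-up, though. The words have length $15$, so the matrices are $2^{15}\times 2^{15}$. Also, $L$ is block diagonal over \emph{all} $u\in H_{3}$: the $w_{0}$-block is $1\times 1$ and trivially zero, and all the content is in the lower blocks. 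The double leaves must be built with exactly the choices of Appendix~\ref{app:ll}; both light leaves end at the same fixed $\expr{u}$, so there is no middle rex move. Otherwise the certificate's coefficients refer to different morphisms. The paper deliberately avoids your route. Its exact GAP check at only $22$ points of $\mathbb{Q}(\varphi)^{3}$ already costs about $15$ hours and $50$ GiB, and symbolic arithmetic over $K(\alpha_s,\alpha_t,\alpha_u)$ is precisely what it sets aside as too slow. So realistically you land on your fallback, and that is where the gap is.

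As stated, the fallback does not certify the identity. First, $\prod_{\beta>0}\beta$ does not clear denominators. A single $10$-valent vertex already has the entry $\pi_{s,t}/(\alpha_{t}^{3}\alpha_{s}^{2})$, whose reduced denominator is $\alpha_{t}^{2}\alpha_{s}$, and composites of splits and many braid moves accumulate higher powers. Second, even with a correct denominator, a bound $N$ on the numerator degree of the matrix entries forces you to test on a unisolvent set of $\binom{N+3}{3}$ points (or a grid of side $N+1$). Here $N$ is the full numerator degree, not the degree $\le 4$ of the unknown coefficients. Random points give only a Schwartz--Zippel probability, and agreement modulo ``several large primes'' does not imply equality in characteristic $0$ without an explicit height bound. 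That is evidence of the same kind as the paper's optional Freivalds check, not a proof.

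The missing idea is the paper's lifting argument, which needs no degree bound on the entries. By \cite[Theorem 3.2]{Lib15} and the square \eqref{eq:comm-sq}, $L=\sum q^{z}_{\expr{f},\expr{e}}\widetilde\Lambda(\LL_{\expr{e},\expr{f}})$ with each $q^{z}_{\expr{f},\expr{e}}$ homogeneous of degree $d_{\expr{e},\expr{f}}$ (at most $4$ here). Suppose some $\delta^{z}=c^{z}-q^{z}$ is nonzero, and take $\ell(z)$ maximal. By Lemma~\ref{lem:triang}, among the terms with nonzero $\delta$, only endpoint-$z$ double leaves reach the $z$-block. At a point $p$ off the root hyperplanes their blocks $T^{z}[p]E_{\expr{f},\expr{e}}A^{z}[p]$ are linearly independent, so $\delta^{z}_{\expr{f},\expr{e}}(p)=0$ on the layer's point set. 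A full-rank interpolation matrix then gives $\delta^{z}=0$. This is why at most $15$ points per layer suffice, and it is what your specialization approach needs in order to become rigorous.
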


This is the main theorem of the paper. The relation is too large to print: it has \(43424\) nonzero polynomial coefficients and is stored as \(54452\) monomial records. All scalar coefficients lie in \(\mathbb{Z}[\varphi]\). We will refer to this data as the \textit{certificate}. The complete file as well as the source code of producer and the GAP verifier can be found in the GitHub repository \cite{H3Code}. 

Using the geometric realization for the computations does not, actually, restrict the applicability. For another realization of \(H_{3}\) over any commutative domain, a diagonal rescaling of the simple roots and coroots gives the Cartan matrix in the balanced form with a parameter \(\rho\) satisfying \[\rho^{2} - \rho - 1 = 0.\] In this normalization the relation is obtained from the certificate by the base change
\[\mathbb{Z}[\varphi] \longrightarrow \bk, \qquad \varphi \longmapsto \rho,\]
together with the corresponding rescaling of the simple roots.
As a consequence, the \(H_{3}\) relation can be inserted into the general diagrammatic presentation. Namely, let \((W,S)\) be a finite-rank Coxeter system and let \(\ch\) be a balanced realization over a commutative domain \(\bk\). Put
  \[R = \operatorname{Sym}(\ch^{*}), \qquad Q = \operatorname{Frac}(R).\]
  Assume that, for every finite dihedral parabolic subgroup, the relevant two-color Jones-Wenzl projectors exist and are rotatable, as required in \cite{EW23}.

\begin{corollaryletter}
  Let \(\Hec_{BS}(W)\) be the Bott--Samelson diagrammatic category defined by the one-color, two-color, and three-color relations of \cite{EW16}, with the relation of Theorem~\ref{th:A}, transported by the normalization of Subsection~\ref{ss:norm}, imposed for every \(H_{3}\)-parabolic subgroup. Let \(\Hec(W)\) be its additive graded Karoubi envelope. Then:
  \begin{enumerate}
  \item there is a well-defined monoidal functor
    \[\Lambda_{W}\colon \Hec_{BS}(W) \longrightarrow \left(\Omega_{Q}W\right)_{\oplus},\]
    and the double leaves are linearly independent over \(R\);
  \item if \(\ch\) is Demazure surjective, the double leaves form an \(R\)-basis of every morphism space in \(\Hec_{BS}(W)\).
  \item if, in addition, \(\bk\) is a complete local domain, the diagrammatic character induces an isomorphism
    \[K_{0}^{\oplus}(\Hec(W)) \cong \mathbf{H}_{W}\]
    of \(\mathbb{Z}[v,v^{-1}]\)-algebras.
  \end{enumerate}
\end{corollaryletter}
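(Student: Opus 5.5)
The plan is to reduce the Corollary to the general machinery of Elias--Williamson, namely \cite{EW16} together with the localization and Jones--Wenzl framework of \cite{EW23}. In that framework the only ingredient left unverified was that the \(H_{3}\) Zamolodchikov relation holds after localization. So the real content is the following statement: with the relation of Theorem~\ref{th:A} imposed, every defining relation of \(\Hec_{BS}(W)\) is killed by the localization functor to \(\left(\Omega_{Q}W\right)_{\oplus}\). Once this is established, the arguments of \cite{EW16} (light leaves span) and \cite{EW23} (the functor is well defined, hence double leaves are independent) go through verbatim.

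\textbf{Step 1: transport the relation.} I would first pass from the geometric realization to an arbitrary balanced realization \(\ch\) over \(\bk\), restricted to an \(H_{3}\)-parabolic. After a diagonal rescaling of roots and coroots, the Cartan matrix entries involving the \(m=5\) edge become \(-\rho\), where \(\rho^{2}-\rho-1=0\), and the remaining entries become \(-1\) or \(0\). The identity of Theorem~\ref{th:A} is an identity of matrices over \(Q\), with entries rational functions in \(\alpha_{s},\alpha_{t},\alpha_{u}\) whose scalars lie in \(\mathbb{Z}[\varphi]\). I would argue that every step of the localization computation is polynomial over \(\mathbb{Z}[\varphi]\): the images of dots, trivalent vertices and \(2m\)-valent vertices in \(\Omega_{Q}W\) have coefficients in \(\mathbb{Z}[\varphi][\alpha]\) with denominators products of roots. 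The two-color vertices in particular are determined by Jones--Wenzl coefficients, which are quantum numbers in \(\varphi\). Hence the identity holds over the generic ring \(\mathbb{Z}[\varphi][\alpha^{\pm}]\), and therefore after the base change \(\varphi\mapsto\rho\). I expect this to be the main obstacle. One has to check that the rescaling commutes with the normalization of the \(2m\)-valent vertices, that it respects rotatability, and that no division by an element that may vanish in \(\bk\) occurs. I would handle this by working in \(\mathbb{Z}[\varphi]\) localized only at the roots and their \(W\)-translates, and these remain nonzero in \(\operatorname{Frac}(R)\) because the realization is faithful on the finite parabolic.

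\textbf{Step 2: part (1).} The functor \(\Lambda_{W}\) is defined on generators as in \cite{EW23}. The one-color and two-color relations hold by that paper, which uses the existence and rotatability of the Jones--Wenzl projectors. The \(I_{2}(m)\times A_{1}\), \(A_{3}\) and \(B_{3}\) Zamolodchikov relations are also treated there. The \(H_{3}\) relation is local to the parabolic, and it holds by Step 1, since the localization of any \(H_{3}\)-parabolic diagram factors through the rank-three computation. Linear independence of double leaves then follows exactly as in \cite{EW23}: their images under \(\Lambda_{W}\) are upper triangular with respect to the Bruhat order, with invertible leading terms in \(Q\).

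\textbf{Step 3: parts (2) and (3).} Spanning is the argument of \cite{EW16}: any diagram is reduced to double leaves using the relations. The only place where a rank-three relation is used is in moving between rex morphisms, and the imposed relation expresses the \(H_{3\)} difference as a combination of lower-term double leaves, exactly as needed. Demazure surjectivity is used, as in \cite{EW16}, to remove polynomials via the barbell and polynomial-forcing relations. Combined with Step 2, this gives a basis. For (3), with \(\bk\) complete local, the graded Hom spaces are free and finite rank, so the category is Krull--Schmidt. The standard argument of \cite{EW16} then applies: indecomposables are indexed by \(W\), and the graded ranks of Hom spaces match the Soergel Hom formula via the double leaf basis. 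Together these give that the character map is an isomorphism \(K_{0}^{\oplus}(\Hec(W))\cong\mathbf{H}_{W}\).
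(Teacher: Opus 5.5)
Your proposal is correct and follows essentially the paper's route: you transport the identity of Theorem~\ref{th:A} along the ring map \(\Theta^{-1}\mathbb{Z}[\varphi][\alpha_s,\alpha_t,\alpha_u]\to Q\) given by \(\varphi\mapsto\rho\), \(\alpha_r\mapsto\beta_r\) (this is Proposition~\ref{pro:rel}), and then invoke \cite{EW23} for well-definedness of \(\Lambda_W\) and independence of double leaves, the spanning argument of \cite{EW16} under Demazure surjectivity, and \cite[Corollary 6.26]{EW16} for \(K_0\). One small correction: faithfulness of the realization is not a hypothesis here and is not needed, because \(w(\beta_r)\neq 0\) already holds since \(w\) acts invertibly on \(\ch^*\) and \(\beta_r\neq 0\) (its pairing with a neighbouring coroot is the unit \(-\rho\) or \(-1\)).
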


The corresponding statements for unbalanced realizations are obtained by making the modifications described in \cite[\S 7]{EW23}.

The computation uses several reductions. First, after localization the morphism spaces split into blocks indexed by endpoints in \(H_{3}\). Then we use the triangularity of double leaves in path-dominance order. This allows us to carry out the computation by induction on endpoint length. Finally, instead of performing symbolic arithmetic with rational functions, we evaluate the relevant matrices at exact integer points and recover the coefficients by interpolation. All of this is the content of Section~\ref{sec:comp}.

The implementation and verification are described in Section~\ref{sec:implementation}. The producer is written in Rust, and the independent GAP verifier checks the resulting certificate. We also provide substantially faster probabilistic verification based on Freivalds' algorithm.

In Section~\ref{sec:con} we explain the normalization for arbitrary \(H_{3}\) realizations and derive the categorification consequences stated above. 

The appendix describes the format of the certificate, including the rex paths, interpolation points, and polynomial coefficients. The certificate together with the source code is provided in the github repository.

\subsection*{Tool and computational resource disclosure}

Large language models, namely OpenAI's ChatGPT and Codex, were used during the preparation of this work. Their use included language editing, discussion and criticism of intermediate arguments, and substantial assistance with the development, debugging, review, and optimization of the computational code.

The mathematical outline and overall computational strategy were developed by the author and were provided to an AI agent for implementation in Rust. However, the source code contained in the repository is far from being the first output of the model. It was obtained through a long process of correcting and directing the AI agent. It also required extensive profiling and optimization loops to achieve a feasible computation time.

The author did not manually inspect and verify every part of the Rust producer. The correctness of the main result, however, does not rely on the correctness of this implementation. The resulting certificate is independently checked by GAP. The author has inspected and verified the GAP code and takes full responsibility for it as well as for all mathematical statements and arguments of the paper.

\subsection*{Acknowledgments}

The author would like to thank his friends Igor Shimanogov for suggesting the optimization workflow, and Maksim Terekhov for kindly providing his powerful PC for tests and computations. The author also thanks his adviser Anton Khoroshkin for introducing him to a wonderful book \cite{EMTW20}, which inspired him to study diagrammatic calculus, as well as for everything he has taught him and for his help in many different ways.

\section{Setup}
\subsection{The geometric realization}\label{ss:real}
Let \(H_{3} = (W, S)\) be the Coxeter system with \[S = \{\reds, \bluet, \greenu\}, \qquad m_{\reds\bluet} = 5, \quad m_{\bluet\greenu} = 3, \quad m_{\reds\greenu} =2.\]
We use the standard geometric realization over \(K = \mathbb{Q}(\varphi)\), where \(\varphi = \frac{1 +\sqrt{5}}{2}\). With respect to the ordered set \(\reds, \bluet, \greenu\), its Cartan matrix is 
\[A = (\alpha_{i}^{\vee}, \alpha_{j}) = \begin{pmatrix}
        2 & - \varphi & 0 \\
        - \varphi & 2 & -1 \\
        0 & -1 & 2 \end{pmatrix}.\]
    Let \(V\) denote the corresponding reflection representation, and set \(R = \operatorname{Sym}(V^{*})\) and \(Q = \operatorname{Frac}(R)\). We give every simple root degree \(2\). Let
    \[\Phi = \{w(\alpha_{r}) \mid w \in H_{3}, \ r \in S\}.\]We also put
    \[A_{0} = \mathbb{Z}[\varphi].\]
    The calculation is performed over \(K\), but every coefficient in the final relation belongs to \(A_{0}[\alpha_{s},\alpha_{t},\alpha_{u}].\)
    The standard geometric realization is faithful and balanced, it also satisfies Demazure surjectivity. We discuss the dependence on the realization later in \ref{sec:con}.

\subsection{The pre-Hecke category and its localization}
    Let \(\widetilde{\Hec}_{BS}(H_{3})\) be the \(K\)-linear strict monoidal category generated by the objects \(S = \{\reds, \bluet, \greenu\}\) and the following morphisms. The violet color denotes an arbitrary color \(\anyx\in S\). These are the usual generators of the diagrammatic Hecke category, see \cite{EW16}. 

\par\medskip
\noindent
\textbf{Univalent vertices (degree \(1\)).}

\[
\begin{array}{
  c@{\qquad}l
  @{\qquad\qquad}
  c@{\qquad}l
}
\begin{array}{c}
  \dotStart \\[3ex]
  \text{``startdot''}
\end{array}
&
:\;(\anyx)\longmapsto\emptyset,
&
\begin{array}{c}
  \dotFinish \\[3ex]
  \text{``enddot''}
\end{array}
&
:\;\emptyset\longmapsto(\anyx).
\end{array}
\]

\par\medskip
\noindent
\textbf{Trivalent vertices (degree \(-1\)).}

\[
\begin{array}{
  c@{\qquad}l
  @{\qquad\qquad}
  c@{\qquad}l
}
\begin{array}{c}
  \mergeVertex \\[3ex]
  \text{``merge''}
\end{array}
&
:\;(\anyx,\anyx)\longmapsto(\anyx),
&
\begin{array}{c}
  \splitVertex \\[3ex]
  \text{``split''}
\end{array}
&
:\;(\anyx)\longmapsto(\anyx,\anyx).
\end{array}
\]

\par\medskip
\noindent
\textbf{Two-colored vertices (degree \(0\)).}

\[
\begin{array}{c@{\qquad}l}
\begin{array}{cc}
  G_{\reds,\greenu}: = &  \vertexSU \\[3ex]
 & \text{``4-valent vertex''}
\end{array}
&
:\;
(\reds,\greenu)
\longmapsto
(\greenu,\reds),
\qquad
m_{\reds \greenu}=2,
\\[1.5em]
\begin{array}{cc}
   G_{\bluet,\greenu}: = & \vertexTU \\[3ex]
  &\text{``6-valent vertex''}
\end{array}
&
:\;
(\bluet,\greenu,\bluet)
\longmapsto
(\greenu,\bluet,\greenu),
\qquad
m_{\bluet \greenu}=3,
\\[1.5em]
\begin{array}{cc}
   G_{\reds,\bluet}: = & \vertexST \\[3ex]
  &\text{``10-valent vertex''}
\end{array}
&
:\;
(\reds,\bluet,\reds,\bluet,\reds)
\longmapsto
(\bluet,\reds,\bluet,\reds,\bluet),
\qquad
m_{\reds \bluet}=5.
\end{array}
\]

\par\medskip
\noindent
\textbf{Polynomial boxes (degree \(\deg f\)).}
For every homogeneous polynomial \(f\in R\),

\[
\begin{array}{c@{\qquad}l}
  \polynomialBox{f}
&
:\;\emptyset\longrightarrow\emptyset,
\qquad f\in R.
\end{array}
\]

In \(\widetilde{\Hec}_{BS}(H_{3})\) we impose only one-color and two-color relations, whose complete list can be found in \cite{EW16}. Note that, for now, we do not impose the three-color relation.

To describe the localization, let us recall the \(Q\)-groupoid category of \cite{EW23}. The category \(\Omega_{Q}H_{3}\) has one object \(r_{w}\) for every \(w \in H_{3}\), with
\[\Hom_{\Omega_{Q}H_{3}}(r_{x},r_{y}) =
  \begin{cases}
    Q, \quad x = y, \\
    0, \quad x \neq y.
  \end{cases}
\]
Its monoidal structure is induced by multiplication in \(H_{3}\) and the natural action of \(H_{3}\) on \(Q\). Thus
\[r_{x} \otimes r_{y} = r_{xy}.\]
We work with its additive closure \(\QG\).

Even without the three-color relation, the one-color and two-color assignments define a monoidal functor
\[\widetilde\Lambda \colon \widetilde{\Hec}_{BS}(H_{3}) \to \QG, \qquad (x) \mapsto r_{\operatorname{id}} \oplus r_{x}.\]
Thus the image of the expression  \( \expr{w} = (s_{1}, s_{2}, \dots, s_{m})\) is given by the sum indexed by subexpressions \(\expr{e} \in \{0,1\}^{m}\) of the expression  \(\expr{w}\). Put
\[\expr{w}^{\expr{e}} := s_{1}^{e_{1}}\dots s_{m}^{e_{m}}.\]
Then 
 \[\widetilde \Lambda (\expr{w}) = \bigoplus_{\expr{e} \subset \expr{w}} r_{\expr{w}^\expr{e}}\]
Morphisms between such objects can therefore be represented by matrices. On the one-colored generating morphisms, the functor \(\widetilde{\Lambda}\) is given by

\begin{subequations}\label{subeq:Lambda-one-color}
\begin{gather}
\label{eq:Lambda-f}
\begin{array}{c}
\polynomialBox{f}
\end{array}
\mapsto
\begin{array}{c|c}
 & \emptyset\\
\hline
\emptyset & f
\end{array}
\\[1em]
\label{eq:Lambda-startdot}
\begin{array}{c}
\dotStart
\end{array}
\mapsto
\begin{array}{c|cc}
 & 0 & 1\\
\hline
\emptyset & \alphax & 0
\end{array}
\\[1em]
\label{eq:Lambda-enddot}
\begin{array}{c}
\dotFinish
\end{array}
\mapsto
\begin{array}{c|c}
 & \emptyset\\
\hline
0 & 1\\
1 & 0
\end{array}
\\[1em]
\label{eq:Lambda-split}
\begin{array}{c}
\splitVertex
\end{array}
\mapsto
\begin{array}{c|cc}
 & 0 & 1\\
\hline
00 & 1/\alphax & 0\\
01 & 0 & 1/\alphax\\
10 & 0 & -1/\alphax\\
11 & -1/\alphax & 0
\end{array}
\\[1em]
\label{eq:Lambda-merge}
\begin{array}{c}
\mergeVertex
\end{array}
\mapsto
\begin{array}{c|cccc}
 & 00 & 01 & 10 & 11\\
\hline
0 & 1 & 0 & 0 & 1\\
1 & 0 & 1 & 1 & 0
\end{array}
\end{gather}
\end{subequations}

In order to define the functor on two-colored generators, let 
\[
\expr{w}_{x,y}=(x,y,x,y,\dots)
\quad\text{and}\quad
\expr{w}_{y,x}=(y,x,y,x,\dots)
\]
be the alternating expressions of length \(m_{xy}\). The matrix of \(\widetilde{\Lambda}(G_{x,y})\) has columns indexed by
subexpressions \(\expr{e}\subset \expr{w}_{x,y}\) and rows
indexed by subexpressions
\(\expr{e}'\subset \expr{w}_{y,x}\).
Define
\[
\pi_{x,y}
:=
\prod_{i=1}^{m_{xy}}
x_{1}x_{2}\cdots x_{i-1}(\alpha_{x_{i}}),
\qquad
\text{where }(x_{1},\dots,x_{m_{xy}})=\expr{w}_{x,y},
\]
and
\[
\zeta_{x,y}(\expr{e}')
:=
\prod_{i=1}^{m_{xy}}
y_{1}^{e'_{1}}y_{2}^{e'_{2}}\cdots y_{i-1}^{e'_{i-1}}(\alpha_{y_{i}}),
\qquad
\text{where }(y_{1},\dots,y_{m_{xy}})=\expr{w}_{y,x}.
\]

Then the matrix coefficients of \(\widetilde{\Lambda}(G_{x,y})\) are
\begin{equation}\label{eq:Lambda-two-color-general}
\bigl(\widetilde{\Lambda}(G_{x,y})\bigr)_{\expr{e}}^{\expr{e}'}
=
\begin{cases}
\dfrac{\pi_{x,y}}{\zeta_{x,y}(\expr{e}')}
&
\text{if }
\expr{w}_{x,y}^{\,\expr{e}}
=
\expr{w}_{y,x}^{\,\expr{e}'},
\\[1em]
0
&
\text{otherwise.}
\end{cases}
\end{equation}
This is the formula of \cite[Lemma~2.11]{EW23}.
The one-color and two-color relations are satisfied by these assignments as proven in \cite{EW23}.

For the geometric realization fixed in Subsection~\ref{ss:real}, let
\[\widetilde{\CF}\colon \widetilde{\Hec}_{BS}(H_{3})\longrightarrow \mathrm{BSBim}_{H_3} \]
denote the standard functor sending the diagrammatic generators to the corresponding morphisms of Bott--Samelson bimodules. The action of \(H_{3}\) on \(Q\) is faithful, hence the constructions fit into the commutative diagram
\begin{equation}
  \label{eq:comm-sq}
  \begin{tikzcd}[column sep=large, row sep=large]
    \widetilde{\Hec}_{BS}(H_{3})
    \arrow[r, "\widetilde{\CF}"]
    \arrow[d, "\widetilde\Lambda"]
    &
    \BSB \arrow[d, "{(-)\otimes_R Q}"]
    \\
    \QG
    \arrow[r, "\sim"]
    &
    \mathrm{StdBim}_{Q,H_3}
  \end{tikzcd}
\end{equation}
For a discussion of the lower horizontal arrow, see \cite[\S 5]{EW23}. 
\subsection{The missing relation}
The longest element in \(H_{3}\) has length 15. A direct computation shows that its rex graph has \(286\) vertices and \(640\) edges. We fix two reduced expressions \[\expr{x} = utstuststuststs, \ \expr{y} = ststsutstsutstu\]
of \(w_{0}\), and two rex morphisms between them 
\[P_{L} =\rexLeftPretty , \qquad P_{R} = \rexRightPretty.\]
\begin{remark}
  These are exactly the two morphisms used in \cite[\S 1.4.3]{EW16}. However, our color convention differs. In their picture, \(s\) is green and \(u\) is red. 
\end{remark}
\begin{remark}
  The corresponding sequences of braid moves are fixed in the certificate, see Appendix~\ref{app:cert}.
\end{remark}
For each \(u \in H_{3}\), fix a reduced expression \(\expr{u}\). For every subexpression \(\expr{e} \subset \expr{x}\) with endpoint \(\expr{x}^{\expr{e}} = u\), fix the corresponding light leaf
\[LL_{\expr{e},\expr{x}} \colon \expr{x} \longrightarrow \expr{u}.\]
Likewise, for every \(\expr{f} \subset \expr{y}\) with endpoint \(\expr{y}^{\expr{f}} = u\),fix
\[LL_{\expr{f},\expr{y}} \colon \expr{y} \longrightarrow \expr{u}.\]
These choices are noncanonical, since at several steps in the construction of a light leaf one has to choose a rex path. In the computation we make all such choices deterministically, see Appendix~\ref{app:ll} for details. 

The associated double leaf is
\[\LL_{\expr{e},\expr{f}} := \overline{LL_{\expr{f}, \expr{y}}} \circ LL_{\expr{e},\expr{x}},\]
where the bar denotes a vertical flip.

The certificate defines polynomials
\[c_{\expr{f},\expr{e}}^{u} \in A_{0}[\alpha_{s},\alpha_{t},\alpha_{u}]\]
for pairs of subexpressions with common endpoint \(u\). We set
\begin{equation}
  \label{eq:cert}
  Z_{H_{3}} :=\sum_{u \in H_{3}} \sum_{\substack{\expr{e} \subset \expr{x}, \expr{f} \subset \expr{y} \\  \expr{x}^{\expr{e}} = u =  \expr{y}^{\expr{f}}}} c^{u}_{\expr{f},\expr{e}} \LL_{\expr{e},\expr{f}}.
\end{equation}
A line
\[\texttt{C e f a b c m n}\]
in the certificate contributes
\[(m+n\varphi)\alpha_{s}^{a}\alpha_{t}^{b}\alpha_{u}^{c}\]
to the polynomial \(c_{\expr{f},\expr{e}}^{u}\). The integers \texttt{e} and \texttt{f} encode subexpressions \(\expr{e}\) and \(\expr{f}\) respectively. Again, for details see Appendix~\ref{app:cert}.
Note that several lines of the certificate may contribute to one polynomial coefficient. After combining these contributions, there are \(43424\) nonzero polynomials \(c_{\expr{f},\expr{e}}^{u}\), represented by \(54452\) nonzero monomial records.

With this notation we can state our main computational result.

\begin{theorem}\label{th:rel}
  For the standard geometric realization of \(H_{3}\), we have
  \begin{equation}\label{eq:mainrel}\widetilde\Lambda(P_{L}) - \widetilde\Lambda(P_{R}) = \widetilde\Lambda(Z_{H_{3}}).\end{equation}
\end{theorem}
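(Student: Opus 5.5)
Since \(\widetilde\Lambda\) is a monoidal functor into \(\QG\), both sides of \eqref{eq:mainrel} are explicit matrices with rows indexed by subexpressions \(\expr{f}\subset\expr{y}\) and columns indexed by subexpressions \(\expr{e}\subset\expr{x}\). Each entry lies in \(Q\) and vanishes unless \(\expr{x}^{\expr{e}}=\expr{y}^{\expr{f}}\). The plan is to verify \eqref{eq:mainrel} entrywise, block by block over endpoints \(w\in H_{3}\), in exact arithmetic over \(K=\mathbb{Q}(\varphi)\).

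First, I compute \(\widetilde\Lambda(P_{L})\) and \(\widetilde\Lambda(P_{R})\). Each rex morphism is a vertical composite of braid moves, each of the form \(\mathrm{id}\otimes G_{x,y}\otimes\mathrm{id}\), fixed by the braid sequences in the certificate. Its image is the corresponding product of block-sparse matrices built from \eqref{eq:Lambda-two-color-general}. Here the tensor product with identities acts through the twisted action of \(H_{3}\) on \(Q\): a factor on the right is acted on by the endpoint of the left part of the subexpression.

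Second, I compute \(\widetilde\Lambda(\LL_{\expr{e},\expr{f}})\) for every pair with common endpoint \(u\). Each light leaf \(LL_{\expr{e},\expr{x}}\) is built inductively from dots, trivalent vertices and rex moves, following the deterministic choices of Appendix~\ref{app:ll}. Its image is therefore again a product of matrices from \eqref{subeq:Lambda-one-color} and \eqref{eq:Lambda-two-color-general}. The vertical flip corresponds to the appropriate transposition-type duality of these matrices, or more simply to the images of the flipped generators. I then form \(\widetilde\Lambda(Z_{H_{3}})=\sum c^{u}_{\expr{f},\expr{e}}\,\widetilde\Lambda(\LL_{\expr{e},\expr{f}})\) using the certificate polynomials, and compare it with \(\widetilde\Lambda(P_{L})-\widetilde\Lambda(P_{R})\).

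The main obstacle is size. There are \(2^{15}\) subexpressions on each side and many double leaves, each a long product, and everything has rational-function entries. I would avoid symbolic rational functions as follows. Clear denominators: all denominators are products of roots in \(\Phi\), so multiplying each block by a fixed product \(D\) of positive roots with sufficient multiplicities yields polynomial identities of bounded degree, and the degree is controlled by the grading. Then verify the resulting polynomial identity by exact evaluation at sufficiently many integer points \((\alpha_{s},\alpha_{t},\alpha_{u})\in\mathbb{Z}^{3}\) avoiding the root hyperplanes. A set of points that is unisolvent for polynomials of that degree, for instance a full grid, makes the check a proof. Alternatively, the identity can be checked symbolically within each endpoint block, in GAP, by processing blocks in order of endpoint length. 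This keeps each matrix small, since only the \(w\)-isotypic rows and columns are involved. The delicate points are getting the twisted action and sign conventions in the two-color matrices exactly right and bounding degrees so that the evaluation check is rigorous. A Freivalds-type randomized check is useful only as a fast preliminary test; the deterministic check is what establishes the theorem.
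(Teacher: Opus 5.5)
Your argument is sound in principle, but it differs from the paper at the step that carries the rigor: why finitely many exact evaluations prove an identity of matrices over \(Q\).

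You test \(L=\widetilde\Lambda(Z_{H_3})\) directly as an identity of rational functions. You clear denominators and evaluate on a point set unisolvent for the degree of the resulting numerators.

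The paper never bounds denominators. It uses Libedinsky's double leaves basis theorem in \(\BSB\), together with the square \eqref{eq:comm-sq}, to know a priori that \(L=\sum q^{z}_{\expr{f},\expr{e}}\,\widetilde\Lambda(\LL_{\expr{e},\expr{f}})\), where the \(q^{z}_{\expr{f},\expr{e}}\) are polynomials homogeneous of degree \(d_{\expr{e},\expr{f}}\le 4\). It then shows that \(\delta=c-q\) vanishes:
\begin{enumerate}
\item Take a nonzero \(\delta^{z}\) with \(\ell(z)\) maximal. The \(z\)-block of \(\sum\delta\,\widetilde\Lambda(\LL)\) then only sees double leaves with endpoint \(z\), since higher ones have \(\delta=0\) by maximality.
\item By Lemma~\ref{lem:triang} these are \(T^{z}[p]E_{\expr{f},\expr{e}}A^{z}[p]\), with \(T^{z}[p],A^{z}[p]\) triangular with nonzero diagonal at any point off the root hyperplanes. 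Hence they are linearly independent.
\item So \(\delta^{z}\) vanishes on \(P_{\ell(z)}\). This set is unisolvent in degree \(\le 4\) (at most \(15\) points per layer), which forces \(\delta^{z}=0\).
\end{enumerate}

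Your route buys independence from the basis theorem and from triangularity; it is a pure polynomial identity test. Its cost is that the multiplicities in \(D\) are not supplied by the grading. The grading only tells you that all entries are homogeneous of a fixed degree (here \(0\)), so the numerators have degree \(\deg D\) once \(D\) is chosen. You therefore need a separately proven denominator bound, for instance by tracking root multiplicities through every matrix product, or an a priori bound from the bimodule side. The resulting degree, and hence the number of points, is much larger than \(4\).

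A single exact point already means evaluating both rex morphisms and all \(43424\) double leaves; the paper's check takes about \(15\) hours on \(22\) threads. So this is the difference between a feasible verification and a very expensive one. The fully symbolic alternative you mention is the route the paper declines as too slow.
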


The proof occupies Sections~\ref{sec:comp} and~\ref{sec:implementation}. Assuming the theorem, we may define
\[\Hec_{BS}(H_{3}) := \widetilde{\Hec}_{BS}(H_{3}) / \langle P_{L} - P_{R} - Z_{H_{3}}\rangle_{\otimes}.\]
Theorem~\ref{th:rel} says exactly that \(\widetilde\Lambda\) factors through this quotient, we denote the induced functor by
\[\Lambda \colon \Hec_{BS}(H_{3}) \to \QG.\]
\begin{corollary}
  In \(\mathrm{BSBim}_{H_{3}}\) we have
  \[\widetilde{\CF}(P_{L} - P_{R} -Z_{H_{3}}) = 0.\]
  Consequently, \(\widetilde{\CF}\) factors through \(\Hec_{BS}(H_{3})\), we denote the induced functor by \(\CF\).
\end{corollary}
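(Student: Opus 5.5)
The plan is to deduce the corollary from Theorem~\ref{th:rel} using the commutative square \eqref{eq:comm-sq}, together with the fact that localization is faithful on Bott--Samelson bimodules. Put \(D := P_{L} - P_{R} - Z_{H_{3}} \in \Hom_{\widetilde{\Hec}_{BS}(H_{3})}(\expr{x},\expr{y})\).

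First, by Theorem~\ref{th:rel} and additivity of \(\widetilde\Lambda\), we have \(\widetilde\Lambda(D)=0\) in \(\QG\). Commutativity of \eqref{eq:comm-sq} then gives \(\widetilde{\CF}(D)\otimes_R Q = 0\) as a morphism in \(\mathrm{StdBim}_{Q,H_3}\), because the lower horizontal arrow is an equivalence, in particular faithful.

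Second, we show that \((-)\otimes_R Q\) is faithful on \(\BSB\). A Bott--Samelson bimodule \(B_{\expr{y}} = R\otimes_{R^{y_1}} R \otimes \cdots \otimes_{R^{y_m}} R\) is free of finite rank as a right \(R\)-module, since each \(R\) is free over \(R^{r}\) with basis \(1,\alpha_r\). Hence \(B_{\expr{y}}\) is torsion-free, and the map \(B_{\expr{y}} \to B_{\expr{y}}\otimes_R Q\) is injective.

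Let \(g := \widetilde{\CF}(D)\colon B_{\expr{x}}\to B_{\expr{y}}\) be the bimodule map. For every \(b\in B_{\expr{x}}\), the vanishing \(g\otimes_R Q = 0\) gives \(g(b)\otimes 1 = 0\). By injectivity, \(g(b)=0\), so \(g=0\). This is the first claim.

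Third, for the factorization: \(\widetilde{\CF}\) is a \(K\)-linear monoidal functor. Hence its kernel on morphisms is a tensor ideal, that is, closed under composition with arbitrary morphisms, tensoring with identities, and linear combinations. Since this kernel contains \(D\), it contains \(\langle D\rangle_{\otimes}\). So \(\widetilde{\CF}\) descends to the quotient \(\Hec_{BS}(H_{3})\), giving \(\CF\).

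The only step needing care is the faithfulness of localization: one must localize on the side where the bimodule is known to be free. Both sides actually work, by the symmetric argument. Everything else is formal once Theorem~\ref{th:rel} is available.
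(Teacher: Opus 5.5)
Your proof is correct. Its skeleton matches the paper's: Theorem~\ref{th:rel} and the square \eqref{eq:comm-sq} give \(\widetilde{\CF}(D)\otimes_R Q=0\), and then one needs that localization is faithful on \(\BSB\). The difference is in how you obtain that faithfulness.

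The paper invokes Libedinsky's double leaves basis theorem \cite[Theorem 3.2]{Lib15} to see that \(\Hom_{\BSB}(B_{\expr{x}},B_{\expr{y}})\) is a free \(R\)-module, hence injects into its localization. Strictly speaking, this also uses that \(\Hom(B_{\expr{x}},B_{\expr{y}})\otimes_R Q\) is identified with the Hom space between the localized bimodules. That holds by flat base change for finitely presented modules over the Noetherian ring \(R\otimes_K R\), but the paper does not spell it out.

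You work at the level of the bimodules themselves. Since \(R\) is free over \(R^{r}\) with basis \(1,\alpha_r\) (here \(2\) is invertible), \(B_{\expr{y}}\) is a free right \(R\)-module. Hence \(B_{\expr{y}}\hookrightarrow B_{\expr{y}}\otimes_R Q\), and vanishing can be checked elementwise.

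Your route is more elementary and self-contained: it needs neither a basis theorem for morphism spaces nor any identification of localized Hom spaces. The paper's route leans on a deep result, but one it needs anyway, since the proof of Theorem~\ref{th:rel} uses the same basis theorem to produce the polynomials \(q^{z}_{\expr{f},\expr{e}}\). You also make explicit the descent to \(\Hec_{BS}(H_{3})\), via the fact that the kernel of a \(K\)-linear monoidal functor is a tensor ideal; the paper leaves this implicit.

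One minor slip: passing from \(\widetilde\Lambda(D)=0\) to \(\widetilde{\CF}(D)\otimes_R Q=0\) only uses that the bottom arrow is a linear functor. Its faithfulness plays no role in that direction.
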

\begin{proof}
  By Theorem~\ref{th:rel} and the commutativity of diagram~\eqref{eq:comm-sq}, we have
  \[\widetilde{\CF}(P_{L} - P_{R} - Z_{H_{3}}) \otimes Q = 0.\]
  Bott-Samelson Hom spaces are free over \(R\) by the double leaves basis theorem \cite[Theorem 3.2]{Lib15}. Hence the natural map from such Hom spaces to their localizations is injective. Therefore
   \[\widetilde{\CF}(P_{L} - P_{R} -Z_{H_{3}}) = 0.\]
 \end{proof}
 Now we can draw a commutative diagram
\[
\begin{tikzcd}[column sep=large, row sep=large]
  \widetilde{\Hec}_{BS}(H_{3})
    \arrow[dr, two heads]
    \arrow[ddr, bend right, "\widetilde{\Lambda}"]
    \arrow[drr, bend left, "\widetilde{\mathcal{F}}"]
  &
  \\
  &
  \Hec_{BS}(H_{3})
    \arrow[r, "\mathcal{F}"]
    \arrow[d, "\Lambda"]
  &
 \BSB
    \arrow[d, "{(-)\otimes_R Q}"]
  \\
  &
 \QG
    \arrow[r, "\sim"]
  &
  \mathrm{StdBim}_{Q,H_3}
\end{tikzcd}
\]
which will be used later. 
\section{Computation}\label{sec:comp}

Let us take a closer look at the relation we want to compute.  After applying \(\widetilde\Lambda\), an expression \(\expr{w} = (w_{1}, w_{2}, \dots, w_s)\) splits as
\[
\widetilde{\Lambda}(\expr{w})
=
\bigoplus_{\expr{e}\subset\expr{w}} r_{\expr{w}^{\expr{e}}}.
\]

As a left \(Q\)-module, each such summand has rank one, but its right action is
twisted by the endpoint \(\expr{w}^{\expr{e}}\). For a subexpression \(\expr{e} \subset \expr{w}\), write
\[r_{\expr{e}}:= r_{\expr{w}^{\expr{e}}}.\]
This is an abuse of notation: the same subexpression for different words gives different endpoints, the word, though, will always be clear from the context.

Denote the localization of the difference by
\[L := \widetilde\Lambda (P_{L}) - \widetilde\Lambda (P_{R}).\]
Then
\[L\colon  \bigoplus_{\expr{e} \subset \expr{x}} r_{\expr{e}} \longrightarrow \bigoplus_{\expr{f} \subset \expr{y}} r_{\expr{f}} \]
For the canonical projectors and inclusions we have
\[p_{\expr{f}} \circ  L \circ \imath_{\expr{e}} \in \Hom_{\QG}\left(r_{\expr{e}}, r_{\expr{f}}\right)\cong \begin{cases}
Q,
&
\text{if }
\expr{x}^{\expr{e}}
=
\expr{y}^{\expr{f}},
\\
0,
&
\text{otherwise.}
\end{cases}\]
Thus, \(L\) may be represented by a sparse  \(2^{15}\times 2^{15}\) matrix with entries in \(Q\).

The aim is to express \(L\) in the basis formed by the localized double leaves
\(\widetilde{\Lambda}(\LL_{\expr{e},\expr{f}})\). Naively, we have an algorithm for computing double leaves, as well as an explicit realization of them under the functor \(\widetilde \Lambda\), so hypothetically one could construct and invert the  complete change-of-basis matrix. But, all of this is happening over the field \(K(\alpha_{s}, \alpha_{t},\alpha_{u})\), which is a computational nightmare, so we will not proceed like this. Instead, we should dive deep into the structure of the morphism \(L\) and double leaves.

\subsection{Decomposition by endpoints}
For \(u \in H_{3}\) , let
\[L^u\colon \bigoplus_{\substack{  \expr{e}\subset\expr{x}\\  \expr{x}^{\expr{e}}=u}}r_{\expr{e}}\longrightarrow\bigoplus_{\substack{  \expr{f}\subset\expr{y}\\  \expr{y}^{\expr{f}}=u}}r_{\expr{f}}\]
be the corresponding block of \(L\). Then
\[
L=\bigoplus_{u\in H_3}L^u.
\]

The degree constraint gives \(94\) possible endpoints in the coefficient expansion. Unfortunately, every double leaf can contribute to several different blocks, so we cannot simply proceed block by block. Instead we use the triangularity of double leaves with respect to the path-dominance order.

\subsection{Path-dominance triangularity}
Let \(\expr{w} = (s_{1}, \dots, s_{m})\) be an expression. For a subexpression \(\expr{e}\subset \expr{w}\) define its partial products by
\[
w_k^{\expr{e}}
:=
s_1^{e_1}\cdots s_k^{e_k},
\]
For two subexpressions \(\expr{e}, \expr{e}'\) of the same expression \(\expr{w}\), write

\[
\expr{e}\preceq \expr{e}' \qquad  \text{if} \   w_{k}^{\expr{e}}\leq w_{k}^{\expr{e}'} , \ \text{in the Bruhat order for every }k=1,\dots,m.
\]
This is the path-dominance order.

Let \(\expr{x}\) and \(\expr{y}\) be arbitrary expressions  and let \(\expr{e}\subset\expr{x}\) and \(\expr{f}\subset\expr{y}\) have the same endpoint \(z = \expr{x}^{\expr{e}} = \expr{y}^{\expr{f}}\). After localization, the corresponding double leaf \(\LL_{\expr{e},\expr{f}}^{z}\colon \expr{x} \to \expr{y}\)  becomes
\[\widetilde \Lambda (\LL_{\expr{e}, \expr{f}}^{z}) \colon \bigoplus_{\expr{e'} \subset \expr{x}} r_{\expr{e}'} \longrightarrow\bigoplus_{\expr{f'} \subset \expr{y}} r_{\expr{f}'} \]
We use the following property of the path-dominance order.
\begin{lemma}[Upper-triangularity]\label{lem:triang}
  If the morphism \(p_{\expr{f}'} \circ \widetilde \Lambda (\LL_{\expr{e}, \expr{f}}^{z}) \circ \imath_{\expr{e}'}\) is nonzero, then
  \begin{itemize}
  \item the subexpressions \(\expr{e}'\) and \(\expr{f}'\) have the same endpoint \(u = \expr{x}^{\expr{e}'} = \expr{y}^{\expr{f}'}\)
  \item one has  \(\expr{e}' \preceq \expr{e}\) and \(\expr{f}' \preceq \expr{f}\) in the path-dominance order.
  \end{itemize}
  Moreover, the morphism  \(p_{\expr{f}} \circ \widetilde \Lambda (\LL_{\expr{e}, \expr{f}}^{z}) \circ \imath_{\expr{e}}\) is given by multiplication by a unit in \(R[\beta^{-1} \mid \beta \in \Phi]\).
  In particular, the common endpoint satisfies \(u \leq z\) in the Bruhat order.
\end{lemma}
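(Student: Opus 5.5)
The plan is to prove the statement for a single light leaf \(LL_{\expr{e},\expr{x}}\colon \expr{x}\to\expr{u}\) and then combine. Write \(\widetilde\Lambda(LL_{\expr{e},\expr{x}})\) as a matrix with columns indexed by \(\expr{e}'\subset\expr{x}\) and rows indexed by subexpressions \(\expr{g}\subset\expr{u}\). The key claim is the following. If the entry at \((\expr{g},\expr{e}')\) is nonzero, then the endpoints agree, \(\expr{e}'\preceq\expr{e}\), and \(\expr{g}\) is dominated by the all-ones subexpression of the reduced word \(\expr{u}\). Moreover, the entry at \((\mathbf{1},\expr{e})\) is a unit in \(R[\beta^{-1}\mid\beta\in\Phi]\).

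Agreement of endpoints is automatic, because \(\QG\) has no morphisms \(r_x\to r_y\) for \(x\neq y\). For the double leaf \(\overline{LL_{\expr{f},\expr{y}}}\circ LL_{\expr{e},\expr{x}}\) the \((\expr{f}',\expr{e}')\) entry is
\[\sum_{\expr{g}\subset\expr{u}} \widetilde\Lambda(\overline{LL_{\expr{f}}})_{\expr{f}',\expr{g}}\;\widetilde\Lambda(LL_{\expr{e}})_{\expr{g},\expr{e}'}.\]
Applying the light-leaf claim to both factors gives the dominance conditions \(\expr{e}'\preceq\expr{e}\) and \(\expr{f}'\preceq\expr{f}\). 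For the flipped leaf we use that vertical flip corresponds to an adjoint/transpose formula under \(\widetilde\Lambda\), up to the invertible diagonal factors visible in \eqref{subeq:Lambda-one-color}. This flip compatibility is checked generator by generator.

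For the diagonal entry at \((\expr{f},\expr{e})\), only \(\expr{g}=\mathbf{1}\) contributes. The reason is that for a reduced word \(\expr{u}\), the unique subexpression with endpoint \(u\) is \(\mathbf{1}\). The entry is therefore a product of two units. Finally, \(\expr{e}'\preceq\expr{e}\) at \(k=m\) gives \(u'=\expr{x}^{\expr{e}'}\le\expr{x}^{\expr{e}}=z\).

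The light-leaf claim is proved by induction along the inductive construction of \(LL_{\expr{e}}\), one letter \(s_k\) at a time. Each step post-composes with a rex move \(\expr{w}_{k-1}\to\expr{w}'\), followed by one of four local pieces: a straight line (U1), an enddot (U0), a merge (D1), or a merge/dot combination (D0), tensored with identities.

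A rex move is built from the two-colour generators \(G_{x,y}\). By \eqref{eq:Lambda-two-color-general}, \(G_{x,y}\) is supported on pairs with equal endpoints. It is diagonal-invertible on the top pair, with entry \(\pi/\zeta\), which is a product of roots and hence a unit in the localized ring.

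The core step is to track how the partial products \(w_k^{\expr{e}'}\) compare with \(w_k^{\expr{e}}\). The target subexpression of the reduced word at stage \(k\) must have endpoint at most the stage endpoint \(w_k^{\expr{e}}\), because every subword of a reduced expression of \(w\) has product \(\le w\) by the subword property. This bounds the partial products letter by letter. In each of the four cases I will read off from the matrices \eqref{eq:Lambda-enddot}, \eqref{eq:Lambda-merge}, \eqref{eq:Lambda-startdot} which input bits survive. For D1 and D0 the new endpoint is smaller; the lifting property of Bruhat order then controls \(w_k^{\expr{e}'}\) versus \(w_k^{\expr{e}}=w_{k-1}^{\expr{e}}s_k\). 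The top entry acquires only factors \(\pm1\), \(\alpha_x\) or \(1/\alpha_x\) (the dot contributes \(\alpha\)), and factors \(\pi/\zeta\). All of these are units after inverting \(\Phi\), taking conjugates of simple roots into account.

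The main obstacle is this step: showing that \(w_k^{\expr{e}'}\le w_k^{\expr{e}}\) for all \(k\) genuinely follows, and not merely at the final endpoint. The difficulty is that rex moves scramble the intermediate letters of the target word. My first attempt is to strengthen the induction hypothesis. At stage \(k\), a nonzero entry forces the entire product of the processed prefix of \(\expr{e}'\) to be \(\le\) the endpoint of the stage-\(k\) target. This target endpoint equals \(w_k^{\expr{e}}\) and is independent of rex moves. The required inequality then follows directly, with the Z-property of Bruhat order handling the merge cases.

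If this strengthening fails, I would fall back on citing the known upper-triangularity of localized light leaves in \cite{EW23}, which is proved exactly in this form.
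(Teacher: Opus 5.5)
Your proof is correct, and it takes a different route from the paper. The paper gives no argument of its own and simply defers to \cite[Proposition~6.6, \S 6.3]{EW16}, whereas you prove the lemma directly for \(\widetilde\Lambda\) from its explicit matrices on generators.

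\emph{The support claim.} The ``strengthened hypothesis'' you worry about does hold, and it makes the support claim essentially formal. Let \(\phi_k\) be the stage-\(k\) leaf, which lands in a reduced word for \(w_k^{\expr{e}}\). Then \(LL_{\expr{e},\expr{x}}=(\cdots)\circ(\phi_k\otimes\mathrm{id})\), and \(\widetilde\Lambda(\phi_k\otimes\mathrm{id})\) is block diagonal in the last \(m-k\) bits. So a nonzero column \(\expr{e}'\) forces a nonzero column \((e'_1,\dots,e'_k)\) of \(\widetilde\Lambda(\phi_k)\). Its endpoint \(w_k^{\expr{e}'}\) is then the product of a subword of a reduced word for \(w_k^{\expr{e}}\), hence \(w_k^{\expr{e}'}\le w_k^{\expr{e}}\). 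Rex moves never enter, and the lifting/Z-property case analysis is superfluous. Likewise \(\overline{LL_{\expr{f},\expr{y}}}=(\overline{\phi_k}\otimes\mathrm{id})\circ(\cdots)\), so the same argument applies verbatim to the rows of the flipped leaf. You therefore do not need the flip/transpose compatibility you planned to check generator by generator.

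\emph{The unit claim.} Reducedness of the intermediate targets leaves only the top-to-top path at each stage. The local factors are then:
\begin{itemize}
\item for the light leaf: \(w_{k-1}(\alpha_{s_k})\) for U0, \(1\) for D0, and \(y(\alpha_s)\) for D1;
\item for the flipped leaf: \(1\) for the enddot, and \(-1/y(\alpha_s)\) for both the split and the cup;
\item for rex moves: ratios of root products.
\end{itemize}
All of these are units in \(R[\beta^{-1}\mid\beta\in\Phi]\).

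\emph{A slip to fix.} D0 is the merge: it has degree \(-1\) and the endpoint is unchanged. D1 is the merge followed by a dot, and there the endpoint drops. Your sentence ``for D1 and D0 the new endpoint is smaller'' is false for D0. This is harmless for the argument.

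\emph{Comparison.} The citation is shorter. However, applying it to \(\widetilde\Lambda\) on \(\widetilde{\Hec}_{BS}(H_3)\), before the \(H_3\) relation is available, tacitly uses that light leaves involve only one- and two-colour generators. Your argument works directly in the setting and with the conventions the paper actually needs. It also shows that only three facts enter the support statement: reducedness of the intermediate targets, the subword property, and the vanishing of morphisms \(r_x\to r_y\) for \(x\neq y\).
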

\begin{proof}
  See \cite[Proposition 6.6, \S 6.3]{EW16} 
\end{proof}

\subsection{Matrix presentation}
Fix an endpoint \(u\). Order all subexpressions of \(\expr{x}\) and \(\expr{y}\) with endpoint \(u\) by any linear order extending the path-dominance order. We always write them from smaller to larger in this order. Thus, if \(\expr{e}_{1},\dots,\expr{e}_{m}\) are such subexpressions of \(\expr{x}\), then
\[\expr{e}_{i} \preceq \expr{e}_{j} \Rightarrow i \leq j,\]
and similarly for \(\expr{f}_{1}, \dots, \expr{f}_{n}\).

Fix some reduced expression \(\expr{u}\) of \(u\). For every \(\expr{e}_{i} \subset \expr{x}\) with endpoint \(u\), consider the corresponding light leaf
\[LL_{\expr{e}_{i}, \expr{x}} \colon \expr{x} \to \expr{u}.\]
The localized morphism  \(LL_{\expr{e}_{i},\expr{x}}\), restricted to the endpoint \(u\) has target only in \(r_{u}\), since \(\expr{u}\) is reduced:
\[\left.\widetilde\Lambda (LL_{\expr{e}_{i},\expr{x}})\right|_{u} \colon \bigoplus_{j =1}^{m}r_{\expr{e}_{j}} \longrightarrow r_{u}\]
For each \(\expr{e}_{i}\), this morphism can be seen as a row \((A^{u}_{\expr{e}_{i},\expr{e}_{1}}, \dots, A^{u}_{\expr{e}_{i},\expr{e}_{m}})\). Varying index \(i\) we obtain a matrix which we denote by \(A^{u}\). 

For every \(\expr{f}_{j} \subset \expr{y}\) with endpoint \(u\), we have the flipped light leaf
\[\overline{LL_{\expr{f}_{j},\expr{y}}} \colon \expr{u} \longrightarrow \expr{y}\]
which, in turn, after localization and restriction to the endpoint \(u\) gives the morphism
\[ \left.\widetilde\Lambda \left(\overline{LL_{\expr{f}_{j},\expr{y}}}\right)\right|_{u} \colon r_{u} \longrightarrow \bigoplus_{i = 1}^{n} r_{\expr{f}_{i}}\]
represented as a column \[\begin{pmatrix} T^{u}_{\expr{f}_{1},\expr{f}_{j}} \\ \vdots \\ T^{u}_{\expr{f}_{n},\expr{f}_{j}} \end{pmatrix}.\]

Varying the index \(j\) we obtain the matrix \(T^{u}\).

By Lemma~\ref{lem:triang} \(A^{u}\) is lower triangular and \(T^{u}\) is upper triangular, both with diagonal entries in \(R[\beta^{-1} \mid \beta \in \Phi]^{\times}\).  Recall that a double leaf \(\LL_{\expr{e}, \expr{f}}\) is the composition
\[\expr{x} \xrightarrow{LL_{\expr{e}, \expr{x}}} \expr{u} \xrightarrow{\overline{LL_{\expr{f}, \expr{y}}}} \expr{y}.\]
Therefore, the \(u\)-block of the localized double leaf is a rank-one matrix
\begin{equation}
  \label{eq:matrix_presentation}
  \left.\widetilde\Lambda \left(\LL_{\expr{e}, \expr{f}}\right)\right|_{u} = \begin{pmatrix} T^{u}_{\expr{f}_{1},\expr{f}} \\ \vdots \\ T^{u}_{\expr{f}_{n},\expr{f}} \end{pmatrix} (A^{u}_{\expr{e},\expr{e}_{1}}, \dots, A^{u}_{\expr{e},\expr{e}_{m}}). 
\end{equation} 
Now, if we define the matrix \(C^{u} = (c_{\expr{f},\expr{e}}^{u})_{\expr{f},\expr{e}}\), then the contribution of double leaves with intermediate endpoint \(u\) to the \(u\)-block is 
\begin{equation}\label{eq:tca}\sum_{\substack{
  \expr{e}\subset\expr{x}\\
  \expr{f}\subset\expr{y}\\
  \expr{x}^{\expr{e}}
  =
  \expr{y}^{\expr{f}}
  =
  u
}}
c_{\expr{f},\expr{e}}^{u}
\left.
\widetilde\Lambda
\left(
\LL_{\expr{e},\expr{f}}
\right)
\right|_{u}
=
T^{u}C^{u}A^{u}.
\end{equation}
\subsection{Descending induction on endpoint length}\label{ssec:induction}

The equation~\eqref{eq:mainrel} is equivalent to

\begin{equation}
  \label{eq:main-rel2}
  L = \sum_{z\in H_{3}} \sum_{\substack{\expr{e} \subset \expr{x}, \expr{f} \subset \expr{y} \\  \expr{x}^{\expr{e}} = z =  \expr{y}^{\expr{f}}}} c^{z}_{\expr{f},\expr{e}} \widetilde\Lambda(\LL_{\expr{e},\expr{f}}).
\end{equation}
We compute the coefficients by descending induction on the length of the common endpoint.

Suppose that all coefficients with endpoint \(z\) satisfying \(\ell(z) > k\) are known. Fix \(u\in H_{3}\) with \(\ell(u) = k\). By Lemma \ref{lem:triang}, only intermediate endpoints \(z \geq u\) can contribute to the endpoint block \(u\). Hence the known contribution from higher endpoints is
\[H^{u} = \sum_{z > u}\sum_{\substack{\expr{e} \subset \expr{x}, \expr{f} \subset \expr{y}, \\ \expr{x}^{\expr{e}} = z = \expr{y}^{\expr{f}}}} c_{\expr{f},\expr{e}}^{z} \left.\widetilde\Lambda(\LL_{\expr{e},\expr{f}})\right|_{u}.\]
Put

\[R^{u} = L^{u} - H^{u}.\]

After this subtraction, only double leaves with endpoint \(u\) remain. Therefore, by~\eqref{eq:tca} we obtain
\begin{equation}\label{eq:matrices}R^{u} = T^{u} C^{u} A^{u}.\end{equation}

Since \(A^{u}\) and \(T^{u}\) are triangular with nonzero diagonal, \(C^{u}\) is obtained by two triangular substitutions. Thus, we do not actually have to invert any matrices directly.

The induction starts at \(w_{0}\). Both rex morphisms have coefficient \(1\) on the all-ones summand, so the \(w_{0}\) block of \(L\) is zero. In fact, the first nonzero coefficients occur at length \(11\), we explain it later in Remark~\ref{rem:1234}. 

\begin{remark}
  The computations of endpoints \(u\) of the same length are independent of each other, since these endpoints are not comparable in the Bruhat order. Therefore the above computation can be performed in parallel for every endpoint, which we heavily exploit. 
\end{remark}

\subsection{Exact interpolation}\label{ssec:intr}
With all this in hand, one can already proceed with the computations. Indeed, matrix equation~\eqref{eq:matrices} is more manageable than one might expect from the naive approach. However, it still requires billions of symbolic operations with rational functions. Packages such as GAP or Sage can in principle perform these operations, but for our purposes, this is too slow. Therefore, we instead evaluate the equations at sufficiently many points and then recover the coefficients by interpolation. 

For a subexpression \(\expr{e}\subset(s_1,\dots,s_m)\), define
\begin{align*}
\dft(\expr{e})
={}&
\#\left\{i:e_i=0,
\ \ell(w_{i-1}^{\expr{e}}s_i)>\ell(w_{i-1}^{\expr{e}})\right\}\\
&-
\#\left\{i:e_i=0,
\ \ell(w_{i-1}^{\expr{e}}s_i)<\ell(w_{i-1}^{\expr{e}})\right\}.
\end{align*}
This is the defect of \(\expr{e}\). The degree of the light leaf \(LL_{\expr{e},\expr{x}}\) is equal to \(\dft(\expr{e})\). Therefore the degree of a double leaf is 
\[\operatorname{deg}\LL_{\expr{e}, \expr{f}} =\dft(\expr{e}) + \dft(\expr{f}).\]
Since \(L\) has degree zero and every simple root has degree \(2\), a nonzero coefficient \(c_{\expr{f},\expr{e}}^{u}\) must be homogeneous of ordinary polynomial degree
\begin{equation}\label{eq:deg-bound}d_{\expr{e},\expr{f}}:= - \frac{\dft(\expr{e}) + \dft(\expr{f})}{2},\end{equation}
equivalently of graded degree \(2d_{\expr{e},\expr{f}}\).

So, we can rewrite~\eqref{eq:mainrel} for the last time as follows:
\begin{equation}
  \label{eq:main-ref2}
  L = \sum_{u \in H_{3}} \sum_{\substack{\expr{e} \subset \expr{x}, \expr{f} \subset \expr{y} \\  \expr{x}^{\expr{e}} = u =  \expr{y}^{\expr{f}}}} \sum_{\substack{a+b+c = d_{\expr{e},\expr{f}} \\a,b,c \geq 0}} c^{a,b,c}_{\expr{f},\expr{e}} \alpha_{s}^{a}\alpha_{t}^{b}\alpha_{u}^{c} \widetilde\Lambda(\LL_{\expr{e},\expr{f}})
\end{equation}
Now the coefficients \(c_{\expr{f},\expr{e}}^{a,b,c}\) belong to \(\mathbb{Q}(\varphi)\). The number of possible monomials of ordinary degree \(d\) is equal to \[N_{d} = \binom{d+2}{d}.\]
Consequently, it is enough to evaluate the equation~\eqref{eq:matrices} at \(N_{d}\) suitably chosen points. For a point \(p = (p_{s},p_{t},p_{u})\) we specialize  \[\alpha_{s} = p_{s}, \quad \alpha_{t} = p_{t}, \quad \alpha_{u} = p_{u}.\]
After subtracting the terms already determined at larger endpoint lengths, the program computes \(C^{u}[p]\), as before, from the equation
\[R^{u}[p] = T^{u}[p] C^{u}[p] A^{u}[p]\]
with all entries in \(\mathbb{Q}(\varphi)\). To recover the polynomial coefficient one solves a (relatively) small linear system
\begin{equation}
\label{eq:interpolation-system}
\begin{pmatrix}
 p_{1,s}^{a_1}p_{1,t}^{b_1}p_{1,u}^{c_1} & \cdots & p_{1,s}^{a_{N_d}}p_{1,t}^{b_{N_d}}p_{1,u}^{c_{N_d}} \\
 \vdots & \ddots & \vdots \\
 p_{N_d,s}^{a_1}p_{N_d,t}^{b_1}p_{N_d,u}^{c_1} & \cdots & p_{N_d,s}^{a_{N_d}}p_{N_d,t}^{b_{N_d}}p_{N_d,u}^{c_{N_d}}
\end{pmatrix}
\begin{pmatrix}
 c^{a_1,b_1,c_1}_{\expr{f},\expr{e}} \\
 \vdots \\
 c^{a_{N_d},b_{N_d},c_{N_d}}_{\expr{f},\expr{e}}
\end{pmatrix}
=
\begin{pmatrix}
 c^{u}_{\expr{f},\expr{e}}(p_1) \\
 \vdots \\
 c^{u}_{\expr{f},\expr{e}}(p_{N_d})
\end{pmatrix}.
\end{equation}
The only things to take care of are avoiding the root hyperplanes and choosing the points so that the matrix in~\eqref{eq:interpolation-system} is nonsingular.

\begin{remark}
  This reduction may seem questionable, since we specialize both the coefficients and the matrices simultaneously. We explain in the next section why this procedure is valid.
\end{remark}
\begin{remark}\label{rem:1234} 
For endpoint lengths \(12,13,14\) every double leaf with matching source and target endpoint has positive degree, therefore none of these double leaves can occur in a degree zero relation. Indeed, the first nonzero coefficient occurs only in the length \(11\). 
\end{remark}

In practice, we use one common collection of exact integer points for each length layer. The number of points required by a layer is determined by the largest value of \(d\) which occurs among its double leaves; coefficients of smaller degree simply use a smaller subset of the same collection. The numbers of determining points for the non-empty layers of lengths \(1,\ldots,11\) are
\[
1,\ 3,\ 6,\ 10,\ 15,\ 10,\ 6,\ 6,\ 3,\ 3,\ 1,
\]
respectively. Thus the layer of length \(1\) requires only one point, while the largest requires \(15\). In particular, the maximal ordinary total degree appearing is \(4\). For every layer except the constant layer of length \(11\), the certificate also contains one additional verification point, which is not used for interpolation. This is unnecessary, but was added for our own peace of mind.

\section{Implementation and verification}\label{sec:implementation}

In the previous section we gave all necessary mathematical reductions for the computation to be possible. Now we briefly describe the implementation and verification of the result.

\subsection{The producer}
The producer is written in Rust. Even after all the reductions above, one still has to perform an enormous number of operations with constants from \(\mathbb{Q}(\varphi) = \mathbb{Q}(\sqrt{5})\). Therefore, it is important to make the arithmetic with these constants as cheap as possible both in time and in memory. We aggressively avoid unnecessary allocations and reuse memory whenever possible.

The implementation follows the descending induction of Subsection~\ref{ssec:induction}. For a fixed length all required interpolation points and all endpoints of this length are independent and are evaluated in parallel.

After specialization, all arithmetic is exact in \(\mathbb{Q}(\varphi)\). Localized columns and triangular matrices are stored sparsely, repeated light leaf, double leaf, and rex computations are cached.

A complete 32-thread run took about \(2\) hours and used about \(30\) GiB of RAM. The source code and exact command are provided in the repository.

\subsection{Verification}\label{sec:verification}

Unfortunately, the result of the computation is not manageable by hand, the final certificate contains \(54452\) nonzero monomial records. Therefore, it would not be honest, if we did not implement a more trusted verification procedure, independent of the Rust producer. The role of the verifier is played by a GAP program, which performs exact computations using the standard exact matrix operations provided by GAP.

The GAP program does not recompute the coefficients from scratch (otherwise, why not use it in the first place?). Instead, it checks that the coefficients produced by Rust give the required identity.

Namely, the verifier performs the following steps.

\begin{enumerate}
\item It constructs the geometric realization of \(H_{3}\) over \(\mathbb{Q}(\sqrt{5})\), and verifies the Coxeter relations. From this realization it reconstructs the length function and the Bruhat order.
\item It checks that the source and target expressions are indeed reduced expressions of the unique longest element \(w_{0}\).
\item It reconstructs all admissible coordinates \((\expr{e}, \expr{f},a,b,c)\) which can occur in the relation from the endpoints and defects. The verifier then checks that every line in the certificate is admissible.
\item\label{it:nonzero} It checks that no chosen point lies on an \(H_{3}\) root hyperplane.
\item\label{it:fullrank} For every layer \(l\) it reconstructs the maximal polynomial degree and checks that the chosen set of points \(P_{l}\) is sufficient for interpolation, namely it checks that the left-hand side matrix of~\eqref{eq:interpolation-system} has full rank.  
\item Finally, at every point \(p \in P_{l}\) as well as at additional verification point, the verifier evaluates both rex morphisms and all double leaves occurring in the certificate and checks the equality for every admissible block \(u\)
\begin{equation}\label{eq:verp}\left.L\right|_{u}[p] = \sum_{z,\expr{e}, \expr{f}} c_{\expr{f},\expr{e}}^{z}(p) \left.\widetilde\Lambda(\LL_{\expr{e},\expr{f}})\right|_{u}[p].\end{equation} 
\end{enumerate}
There are \(22\) specialization points which have to be checked. These checks are independent and can therefore be run in parallel, which can be controlled manually, see README.md in the github repository.

\begin{remark}\label{rem:hard}
This computation is very CPU-consuming. The published certificate passed all \(22\) exact verification jobs. A 22-thread run (this is maximal parallelism here) required about \(15\) hours and about \(50\) GiB of RAM.
\end{remark}

\subsection{Proof of the Theorem~\ref{th:rel}}
  For the geometric realization consider two Bott-Samelson bimodules corresponding to expressions \(\expr{x} \) and \(\expr{y}\)
  \[B_{\expr{x}} = B_{u} \otimes_{R} B_{t} \otimes_{R} \dots \otimes_{R}B_{s}, \qquad B_{\expr{y}} = B_{s}\otimes_{R} B_{t} \otimes_{R} \dots \otimes_{R}B_{u}.\]
  By \cite[Theorem 3.2]{Lib15}, the double leaves form an \(R\)-basis of \(\Hom_{\mathrm{BSBim}_{H_{3}}}(B_{\expr{x}}, B_{\expr{y}}).\)
  Hence there are unique polynomials \(q_{\expr{f},\expr{e}}^{z} \in R\), where \(z\) indicates the endpoint, such that
  \[\widetilde{\CF}(P_{L} - P_{R}) = \sum_{z,\expr{e},\expr{f}} q_{\expr{f},\expr{e}}^{z} \widetilde{\CF}(\LL_{\expr{e},\expr{f}}).\]
  Applying functor \(- \otimes Q\) by the commutativity of the diagram~\eqref{eq:comm-sq} we have
  \begin{equation}\label{eq:pf2}L = \sum_{z,\expr{e},\expr{f}}q_{\expr{f},\expr{e}}^{z}\widetilde\Lambda(\LL_{\expr{e},\expr{f}}).\end{equation}
  Denote the difference \(\delta^{z}_{\expr{f},\expr{e}} = c_{\expr{f},\expr{e}}^{z} - q_{\expr{f},\expr{e}}^{z}\). We shall show that it is zero. Subtracting the specialization of \eqref{eq:pf2} from \eqref{eq:verp}, which is checked by verifier we obtain 
  \[\sum_{z
      \geq u} \sum_{\substack{\expr{e}\subset\expr{x},\expr{f}\subset\expr{y} \\ \expr{x}^{\expr{e}} = z = \expr{y}^{\expr{f}}}} \delta_{\expr{f},\expr{e}}^{z}(p) \left.\widetilde\Lambda(\LL_{\expr{e},\expr{f}})\right|_{u}[p] = 0.\]
  Suppose that there is at least one nonzero polynomial \(\delta_{\expr{f},\expr{e}}^{z}\), among those choose the one with the maximal \(l = \ell(z)\), therefore for every \(z' > z\) all \(\delta^{z'}_{\expr{f},\expr{e}}\) are zero polynomials. Therefore for the block \(z\) we have
\[\sum_{\substack{\expr{e}\subset\expr{x},\expr{f}\subset\expr{y} \\ \expr{x}^{\expr{e}} = z = \expr{y}^{\expr{f}}}} \delta_{\expr{f},\expr{e}}^{z}(p) \left.\widetilde\Lambda(\LL_{\expr{e},\expr{f}})\right|_{z}[p] = 0.\]
We argue that for any \(p \in P_{l}\), the set of matrices \[\left\{ \left.\widetilde\Lambda(\LL_{\expr{e},\expr{f}})\right|_{z}[p]\right\}_{\expr{x}^{\expr{e}} = z = \expr{y}^{\expr{f}}}   \subset \operatorname{Mat}_{n\times m}(K)\]  is \(K\)-linearly independent. Indeed, the equation~\eqref{eq:matrix_presentation} can be rewritten as
    \[\left.\widetilde\Lambda(\LL_{\expr{e},\expr{f}})[p]\right|_{z}= T^{z}[p] E_{\expr{f},\expr{e}} A^{z}[p],\]
    where \(E_{\expr{f},\expr{e}} \in \operatorname{Mat}_{n\times m}(K)\) is the standard matrix unit. The lemma~\ref{lem:triang} with the verifier's check at stage~\ref{it:nonzero} justifies that matrices \(T^{z}[p]\) and \(A^{z}[p]\) are triangular with nonzero diagonal, therefore they are invertible. Since \(E_{\expr{f},\expr{e}}\) are \(K\)-linearly independent, the same is true for \(\left.\widetilde\Lambda(\LL_{\expr{e},\expr{f}})\right|_{z}[p]\).
    Hence, for any \(p\) from \(P_{l}\) we have
    \[\delta_{\expr{f},\expr{e}}^{z}(p) = 0.\]
    The polynomial \(\delta^{z}_{\expr{f},\expr{e}}\) is homogeneous with ordinary degree \(d\), the set \(P_{l}\) is chosen so the kernel of specialization is zero (stage~\ref{it:fullrank}), therefore we conclude \(\delta_{\expr{f},\expr{e}}^{z} = 0\), which is contradiction. 

\subsection{Fast probabilistic verification}

In view of the remark~\ref{rem:hard}, we want to give anyone with any computer the opportunity to still, in some sense, check our result. For this purpose, in addition to the exact verification, the GAP program provides an optional fast probabilistic check based on Freivalds’ algorithm.
\begin{remark}
To be honest, this truly unnecessary piece of work was done not least to justify the two-semester computational complexity course which I was happy to attend as an undergraduate under Professors Daniil Musatov and Sergey Shestakov, who will probably never read this. 
\end{remark}

For every specialization point \(p\), and for every endpoint \(u\) we basically have to check a matrix equality of the form 
\[L^{u}[p] = N^{u}[p],\]
where
\[L^{u}[p] = \left.\widetilde\Lambda(P_{L})\right|_{u}[p] - \left.\widetilde \Lambda(P_{R})\right|_{u}[p]\]
and
\[N^{u}[p] = \sum_{z,\expr{e},\expr{f}} c_{\expr{f},\expr{e}}^{z}(p) \left.\widetilde\Lambda(\LL_{\expr{e},\expr{f}})\right|_{u}[p].\]
Instead of constructing and comparing the two matrices entry by entry, one can choose a random vector \(v\) and check only
\[L^{u}[p]v = N^{u}[p]v.\]
Both sides can be evaluated directly on \(v\), so constructing the full matrices is avoided. Suppose the coordinates of \(v\) are chosen independently and uniformly from
\[S = \{0,1,\dots,2^{32} -1 \} \subset \mathbb{Q}(\sqrt{5}).\]
If the claimed identity is correct, the test always passes. Suppose, on the other hand, that \(L^{u}[p] \neq N^{u}[p]\), and put
\[D = L^{u}[p] - N^{u}[p] \neq 0.\]
Choose a nonzero row of \(D\), and in this row choose a nonzero coefficient \(d_{j}.\) After all coordinates of \(v\) except \(v_{j}\) are fixed, the equation
\[Dv = 0\]
determines at most one possible value of  \(v_{j}\). Since \(v_{j}\) is chosen uniformly from \(S\), we obtain
\[\mathbb{P}(Dv = 0) \leq \frac{1}{|S|} = \frac{1}{2^{32}}.\]
Hence, if the certificate is incorrect, the probability that one complete run of the verifier still accepts it is at most \(2^{-32}\).

Repeating the test with independent random vectors decreases the probability of a false acceptance exponentially, after \(r\) independent runs it is at most \(2^{-32r}\).

\section{Categorification consequences}\label{sec:con}

We now explain how Theorem~\ref{th:rel} gives the relation for other realizations and how the standard consequences of \cite{EW16} and \cite{EW23} follow.

\subsection{Other \(H_{3}\) realizations}\label{ss:norm}
The Coxeter graph of the \(H_{3}\) is a tree, therefore  we can always balance the realization. Let \(\ch\) be an  \(H_{3}\) realization over a commutative domain \(\bk\), which
\[m_{st} =5, \quad m_{tu} = 3, \quad m_{su} = 2. \]
Put
\[x = -\langle\alpha_{s}^{\vee}, \alpha_{t}\rangle, \quad y = -\langle\alpha_{t}^{\vee}, \alpha_{s}\rangle, \quad z  = -\langle\alpha_{t}^{\vee}, \alpha_{u}\rangle, \quad w = -\langle\alpha_{u}^{\vee}, \alpha_{t}\rangle. \]

\begin{lemma}\label{lem:bal}
   In the notation above we have:
  \[(xy)^{2} - 3xy + 1 = 0, \quad zw = 1.\]
  If \(\rho = xy-1\), then
  \[\rho^{2} = \rho + 1,\]
  and \(\rho, x,y,z,w\) are invertible. Let
  \[\lambda_{s} = 1, \quad \lambda_{t} = \frac{\rho}{x}, \quad \lambda_{u} = \frac{\lambda_{t}}{z}.  \]
  After rescaling the simple roots and coroots by
  \[\beta_{r} = \lambda_{r}\alpha_{r}, \quad \beta^{\vee}_{r} =\lambda_{r}^{-1} \alpha_{r}^{\vee},\]
  the Cartan matrix of the realization becomes
  \[
    \begin{pmatrix}
      2 & -\rho & 0 \\
      - \rho & 2 & -1 \\
      0 & -1 & 2
    \end{pmatrix}.
  \]
\end{lemma}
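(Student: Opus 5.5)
The plan is to read off the scalar identities from the dihedral conditions in the definition of a realization, and then check the rescaling entry by entry.

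\textbf{Step 1: the dihedral conditions.} By the definition of a realization in \cite{EW16} (see also \cite{EW23}), for each pair \(r,r'\in S\) with \(m_{rr'}<\infty\), the two-colored quantum numbers \([m_{rr'}]_{r}\) and \([m_{rr'}]_{r'}\) must vanish. These are formed from \(-\langle\alpha_r^\vee,\alpha_{r'}\rangle\) and \(-\langle\alpha_{r'}^\vee,\alpha_r\rangle\).
\begin{itemize}
\item For \(m_{su}=2\), we have \([2]_s=-\langle\alpha_s^\vee,\alpha_u\rangle\) and \([2]_u=-\langle\alpha_u^\vee,\alpha_s\rangle\). Their vanishing gives the two zero entries of the Cartan matrix, both before and after rescaling.
\item For \(m_{tu}=3\), the two-colored \([3]\) equals \(zw-1\) (in either coloring). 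Its vanishing gives \(zw=1\).
\item For \(m_{st}=5\), expanding the two-colored quantum numbers with \(a=xy\) gives \([5]_s=[5]_t=a^2-3a+1\). Their vanishing gives \((xy)^2-3xy+1=0\).
\end{itemize}
I expect this step to be the main obstacle. One has to match the paper's convention for \(x,y,z,w\) with the conventions of \cite{EW16}, and confirm that the realization axioms impose the vanishing of \([m]\) for both colorings. This is also what handles the \(m=2\) entries.

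\textbf{Step 2: the golden-ratio relation and invertibility.} Put \(\rho=xy-1\). Then
\[\rho^2-\rho-1=(a-1)^2-(a-1)-1=a^2-3a+1=0 .\]
Hence \(\rho(\rho-1)=1\), so \(\rho\) is a unit. Next, \(xy=\rho+1=\rho^2\) is a unit, so \(x\) and \(y\) are units. Finally, \(zw=1\) shows that \(z\) and \(w\) are units. In particular the scalars \(\lambda_t=\rho/x\) and \(\lambda_u=\lambda_t/z\) are well-defined units of \(\bk\).

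\textbf{Step 3: the rescaled Cartan matrix.} After rescaling, each entry becomes
\[\langle\beta_i^\vee,\beta_j\rangle=\lambda_i^{-1}\lambda_j\langle\alpha_i^\vee,\alpha_j\rangle .\]
The diagonal entries stay equal to \(2\), and the \((s,u)\) and \((u,s)\) entries stay \(0\). The remaining entries are:
\begin{itemize}
\item \((s,t)\): \(\lambda_t\cdot(-x)=-\rho\);
\item \((t,s)\): \(\lambda_t^{-1}\cdot(-y)=-xy/\rho=-\rho^2/\rho=-\rho\);
\item \((t,u)\): \(\lambda_t^{-1}\lambda_u\cdot(-z)=-1\);
\item \((u,t)\): \(\lambda_u^{-1}\lambda_t\cdot(-w)=-zw=-1\).
\end{itemize}
This is exactly the claimed balanced matrix.

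It then remains to note that the rescaled data is again a realization. Rescaling \(\alpha_r\) and \(\alpha_r^\vee\) by mutually inverse units preserves the reflections \(v\mapsto v-\langle\alpha_r^\vee,v\rangle\alpha_r\), so the action of \(W\), and hence the realization, is unchanged.
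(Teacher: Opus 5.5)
Your proposal is correct and follows the paper's own argument. The paper likewise derives $(xy)^2-3xy+1=0$ and $zw=1$ from the vanishing of the two-colored quantum numbers $[5]$ and $[3]$, and then deduces $\rho^2=\rho+1$ and invertibility; you additionally write out the entrywise Cartan matrix check and the $m_{su}=2$ zero entries, which the paper leaves as ``straightforward.''
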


\begin{proof}
  We use two-colored quantum numbers of \cite[\S 6.1]{EW23}. The relation\([5] = 0\) becomes
  \[(xy)^{2} - 3xy + 1 = 0\] and \([3] = 0\) becomes  \[zw - 1 = 0.\]
  Therefore \(\rho^{2} -\rho -1 = 0\) and all numbers included are invertible. The Cartan matrix computation is straightforward.
\end{proof}

\begin{proposition}\label{pro:rel}
  In the balanced normalization of Lemma~\ref{lem:bal}, the three-color \(H_{3}\) relation is obtained from the certificate by the base change
  \[A_{0} \longrightarrow \bk, \quad \varphi \mapsto \rho.\]
\end{proposition}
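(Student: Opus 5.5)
Theorem~\ref{th:rel} is an identity in the localized category, but Proposition~\ref{pro:rel} needs an identity that holds after any base change \(A_{0}\to\bk\). So the plan is to upgrade Theorem~\ref{th:rel} to a ``generic'' identity over a universal base ring and then specialize.

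Let \(\mathcal{A} = \mathbb{Z}[\varphi]\). By Lemma~\ref{lem:bal}, after rescaling the roots and coroots by the \(\lambda_{r}\), the Cartan matrix is the balanced matrix with parameter \(\rho\). This is exactly the image of the matrix of Subsection~\ref{ss:real} under \(\varphi\mapsto\rho\). Hence the rescaled realization over \(\bk\) is the base change of a universal realization \(\ch_{\mathcal{A}}\) over \(\mathcal{A}\), whose Cartan matrix has entries \(2,-\varphi,-1,0\).

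\textbf{Step 1: all localized morphisms are defined over \(\mathcal{A}\), away from the root hyperplanes.}
\begin{itemize}
\item For each generator, the matrix of \(\widetilde\Lambda\) has entries in \(\mathcal{A}[\alpha_{s},\alpha_{t},\alpha_{u}]\) localized at products of roots. This follows from formulas~\eqref{subeq:Lambda-one-color} and~\eqref{eq:Lambda-two-color-general}. The denominators \(\zeta_{x,y}\) are products of roots \(w(\alpha_{r})\) of the parabolic.
\item These roots are \(\mathcal{A}\)-linear combinations of simple roots, since \(s_{r}\) acts by \(\alpha\mapsto\alpha-\langle\alpha_{r}^{\vee},\alpha\rangle\alpha_{r}\) with coefficients in \(\mathcal{A}\).
\item Consequently \(\widetilde\Lambda(P_{L})\), \(\widetilde\Lambda(P_{R})\) and every \(\widetilde\Lambda(\LL_{\expr{e},\expr{f}})\) are matrices over
\[\mathcal{R} := \mathcal{A}[\alpha_{s},\alpha_{t},\alpha_{u}]\bigl[\beta^{-1}\mid\beta\in\Phi\bigr].\]
\item Theorem~\ref{th:rel} says the identity \(\widetilde\Lambda(P_{L}-P_{R}-Z_{H_{3}})=0\) holds in \(\mathcal{R}\otimes\mathbb{Q}\). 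The ring \(\mathcal{R}\) is a localization of a domain and is torsion-free over \(\mathbb{Z}\), so it injects into \(\mathcal{R}\otimes\mathbb{Q}\). Therefore the identity already holds over \(\mathcal{R}\).
\end{itemize}

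\textbf{Step 2: reduce to an identity of diagrams.}
\begin{itemize}
\item Work in \(\widetilde{\Hec}_{BS}\) over \(\mathcal{A}\), with the balanced realization \(\ch_{\mathcal{A}}\). Over \(\mathcal{A}\) the realization is Demazure surjective, faithful enough, and the two-color Jones--Wenzl projectors exist; the only torsion is at small primes, which needs care.
\item In this setting the results of \cite{EW23} give two facts. First, double leaves span all Hom spaces, by the one- and two-color relations plus the three-color relations of the other types. Second, \(\widetilde\Lambda\) is faithful on the span of double leaves, because double leaves are linearly independent after localization.
\item Write \(P_{L}-P_{R}=\sum q\,\LL\) in the diagrammatic category over \(\mathcal{A}\). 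Step~1 and faithfulness then force \(q=c\).
\item The relation \(P_{L}-P_{R}=Z_{H_{3}}\) is a polynomial identity among diagrams with coefficients in \(\mathcal{A}\). It is therefore preserved by any ring map \(\mathcal{A}\to\bk\) sending \(\varphi\mapsto\rho\), combined with the rescaling of roots from Lemma~\ref{lem:bal}.
\end{itemize}

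\textbf{Alternative avoiding faithfulness over \(\mathcal{A}\).} Argue purely at the localized level. The map \(\mathcal{R}\to \bk[\beta_{r}][\text{roots}^{-1}]\) sends each root to the corresponding root of \(\ch\). This requires the images of roots to be nonzero in \(\bk\), so that the localization is defined. Under this map the localized identity is carried to the localized identity for \(\ch\). Injectivity of localization (the argument of the corollary after Theorem~\ref{th:rel}) then gives the relation in Bott--Samelson bimodules for \(\ch\).

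\textbf{Main obstacle.} The hard step is justifying the passage from \(\mathbb{Q}(\varphi)\) to an arbitrary domain \(\bk\). Roots may vanish, and localization may fail to be injective, in small characteristic. I would first try Step~1 together with a specialization argument. The relation is an identity of morphisms in \(\widetilde{\Hec}_{BS}\) over \(\mathcal{A}\), detected by the faithful localization of the universal realization. Its image under any base change is therefore a valid relation, regardless of whether localization remains faithful for \(\bk\) itself.
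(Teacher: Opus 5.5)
Your Step~1 together with the paragraph \emph{Alternative avoiding faithfulness over $\mathcal{A}$} is exactly the paper's proof, and it is sufficient on its own. Both sides of Theorem~\ref{th:rel} are matrices over $\Theta^{-1}R_0$, where $R_0=\mathbb{Z}[\varphi][\alpha_s,\alpha_t,\alpha_u]$ and $\Theta$ is generated by $\Phi$. The $W$-equivariant ring map $\psi\colon R_0\to R_{\ch}$, given by $\varphi\mapsto\rho$ and $\alpha_r\mapsto\beta_r$, extends to $\Theta^{-1}R_0\to Q_{\ch}$. Applying it entrywise gives the localized relation for $\ch$, and that localized identity is the whole content of the proposition. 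You do not need $\ch$ to be a base change of a rank-three realization over $\mathcal{A}$; only the map out of the polynomial ring $R_0$ is used. The condition you flag but leave open, and then list as the main obstacle, holds automatically. By Lemma~\ref{lem:bal} the scalars $x,y,z,w,\rho$ are units, so each $\beta_r$ pairs with an adjacent coroot to a unit and is therefore nonzero. Since $W$ acts by automorphisms of the free module $\ch^*$, every $\psi(w(\alpha_r))=w(\beta_r)$ is a nonzero element of the domain $R_{\ch}$, in every characteristic. Your last sentence, deducing the relation in Bott--Samelson bimodules over $\ch$, is not part of the proposition and would need extra hypotheses on $\ch$.

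The route you say you would try first, Step~2, has a genuine gap. In $\widetilde{\Hec}_{BS}(H_3)$ only the one- and two-color relations hold. The only rank-three parabolic is $H_3$ itself, so ``three-color relations of the other types'' contribute nothing. The spanning argument of \cite{EW16} takes the $H_3$ Zamolodchikov relation as input: it is precisely what makes the two rex moves $P_L$ and $P_R$ agree modulo lower terms. So writing $P_L-P_R=\sum q\,\LL$ in the diagrammatic category presupposes the very relation you are trying to transport. Injectivity of $\widetilde\Lambda$ on the span of double leaves is true but does not help, because $P_L-P_R$ is not known to lie in that span. The element $P_L-P_R-Z_{H_3}$ is only known to lie in $\ker\widetilde\Lambda$; it is imposed as a defining relation of $\Hec_{BS}(H_3)$, not derived. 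Hence there is no identity of diagrams over $\mathcal{A}$ to specialize along $\mathcal{A}\to\bk$, and the specialization must be done on localized matrices, as in your alternative.
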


\begin{proof}
  Recall that the ring \(\mathbb{Z}[\varphi]\) is denoted by \(A_{0}\). Put
  \[R_{0} = A_{0}[\alpha_{s},\alpha_{t},\alpha_{u}],\]
  let \(\Theta \subset R_{0}\) be the multiplicative set generated by the set of roots \(\Phi\). All the localized matrices used in computation have entries in localized ring \(\Theta^{-1}R_{0}\). Since the certificate coefficients belong to \(R_{0}\), both sides of the identity in Theorem~\ref{th:rel} are matrices over \(\Theta^{-1}R_{0}\).
  Let
  \[R_{\ch} = \operatorname{Sym}_{\bk}(\ch^{*}), \qquad Q_{\ch} = \operatorname{Frac}(R_{\ch}).\]
  By Lemma~\ref{lem:bal}, the assignments 
  \[\psi\colon R_{0} \longrightarrow R_{\ch}, \qquad \varphi \longmapsto \rho, \qquad \alpha_{r} \longmapsto \beta_{r}\]
  define a \(H_{3}\)-equivariant ring homomorphism, moreover all \(w(\beta_{r})\) are nonzero. Now \(\psi\) extends to 
  \[\Theta^{-1}R_{0} \longrightarrow Q_{\ch}.\]
  Applying this homomorphism entry by entry to the matrix identity of Theorem~\ref{th:rel} gives the same relation, with the same choice of expressions, rex-paths and double leaves in the balanced normalization of the realization \(\ch\).  
\end{proof}

For the realization \(\ch\) we denote the relation obtained by the proposition above by \(Z_{H_{3}}^{\ch}\).

\subsection{Completion of the diagrammatic presentation} Let \((W,S)\) be a finite-rank Coxeter system and let \(\ch\) be a balanced realization over a commutative domain \(\bk\). Put
\[R = \operatorname{Sym}(\ch^{*}), \qquad  Q = \operatorname{Frac}(R).\]
Assume that for each finite dihedral parabolic subgroup the Jones-Wenzl projector exists and is rotatable. Define \(\Hec_{BS}(W)\) by the one-color, two-color relations of \cite{EW16}, for every finite rank-three parabolic subsystem of \((W,S)\) except the type \(H_{3}\) impose the same three-color relation as in \cite{EW16}. For every finite \(H_{3}\)-parabolic \(W_{J}\), impose three-color relation
\[P_{L} - P_{R} = Z_{H_{3}}^{\ch_{J}}.\]

Let \(\Hec(W)\) be the additive graded Karoubi envelope.

\begin{theorem}[Diagrammatic categorification]
  Under the assumption above:
  \begin{enumerate}
  \item there is a well-defined monoidal localization functor
    \[\Lambda_{W} \colon \Hec_{BS}(W) \longrightarrow \left(\Omega_{Q}W\right)_{\oplus},\]
    and the double leaves are linearly independent over \(R\);
  \item if \(\ch\) is Demazure surjective, the double leaves form an \(R\)-basis of every morphism space;
  \item if, in addition, \(\bk\) is a complete local ring, the usual character map gives an isomorphism
    \[K_{0}^{\oplus}(\Hec(W)) \cong \mathbf{H}_{W}\]
    of \(\mathbb{Z}[v,v^{-1}]\)-algebras.
  \end{enumerate}
\end{theorem}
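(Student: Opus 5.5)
The plan is to reduce all three parts to the framework of \cite{EW23}. That paper proves these statements for every Coxeter system, conditional on a presentation in which every generating relation is sent to zero by the localization functor. Theorem~\ref{th:rel} together with Proposition~\ref{pro:rel} supplies exactly the missing \(H_{3}\) input.

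\textbf{Part (1).} First we construct \(\Lambda_{W}\) on the free pre-category generated by the one-color, two-color and three-color generators, using the assignments \eqref{subeq:Lambda-one-color} and \eqref{eq:Lambda-two-color-general}. By \cite{EW23}, under the Jones--Wenzl existence and rotatability hypotheses, these assignments satisfy:
\begin{itemize}
\item all one-color and two-color relations;
\item the three-color relations of types \(A_{1}\times I_{2}(m)\), \(A_{3}\) and \(B_{3}\).
\end{itemize}
It remains to check the \(H_{3}\) relation for each finite \(H_{3}\)-parabolic \(W_{J}\). Restricting \(\ch\) to \(J\) gives a balanced realization \(\ch_{J}\) of \(H_{3}\), possibly non-faithful. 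Lemma~\ref{lem:bal} puts it in the normalized form, and Proposition~\ref{pro:rel} gives the identity \(\Lambda(P_{L})-\Lambda(P_{R})=\Lambda(Z^{\ch_{J}}_{H_{3}})\) in \(\Omega_{Q}W_{J}\).

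One point needs care: the localization to \(\Omega_{Q}W\) for a parabolic subdiagram is the restriction of the localization for \(W_{J}\), after inducing along \(W_{J}\subset W\). This is a monoidal embedding, so the relation holds in \((\Omega_{Q}W)_{\oplus}\). Hence \(\Lambda_{W}\) is well defined.

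Linear independence of double leaves over \(R\) then follows as in \cite[\S 6]{EW23}. The argument uses the upper-triangularity of Lemma~\ref{lem:triang} with invertible diagonal entries: localized double leaves restricted to top blocks are triangular with respect to path dominance, so any \(R\)-linear relation among them forces every coefficient to vanish, by descending induction on the endpoint, exactly as in the proof of Theorem~\ref{th:rel}.

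\textbf{Part (2).} Here we need spanning. We invoke the double-leaves spanning argument of \cite{EW16}, which is purely diagrammatic and uses only the relations to reduce any diagram to light leaves. Its one input from the three-color relations is that any two rex morphisms for \(w_{0}\) of a rank-three parabolic agree modulo lower terms. For \(H_{3}\) this holds in the form \(P_{L}-P_{R}=Z_{H_{3}}\), where \(Z_{H_{3}}\) factors through lower terms, so the reduction argument carries over verbatim. Demazure surjectivity enters exactly where it does in \cite{EW16}, namely in the one-color polynomial-forcing step. Spanning combined with part (1) gives the basis.

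\textbf{Part (3).} With a complete local domain \(\bk\), the categories are Krull--Schmidt. The double leaves basis gives graded ranks of Hom spaces matching the standard form on \(\mathbf{H}_{W}\). The classification of indecomposables \(B_{w}\) by \cite{EW16} (via \cite{EW23}) then gives \(K_{0}^{\oplus}\cong\mathbf{H}_{W}\).

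\textbf{Main obstacle.} I expect the hardest step to be part (1): applying Proposition~\ref{pro:rel} to a non-faithful or non-geometric \(\ch_{J}\), and confirming that the root products \(w(\beta_{r})\) are nonzero in \(\bk\). That nonvanishing is what lets the localized denominators in \(\Theta^{-1}R_{0}\) specialize. I would first try to deduce it from the balanced Cartan matrix, checking that no root in \(\Phi\) maps to zero in \(\ch^{*}\).
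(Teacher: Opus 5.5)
Your proposal takes the same route as the paper. Proposition~\ref{pro:rel} is the only new input: it supplies the missing \(H_3\) relation to the localization framework of \cite{EW23}. After that, linear independence, spanning under Demazure surjectivity, and the \(K_0\) isomorphism are quoted from \cite{EW23} and \cite{EW16}, just as the paper does.

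The obstacle you anticipate is milder than you expect. \(W_J\) acts on \(\ch^{*}\) by linear automorphisms, so \(w(\beta_r)=\lambda_r\,w(\alpha_r)\) vanishes only if \(\alpha_r=0\), and that case is already excluded for the localization functor to make sense, since a split is sent to \(1/\alpha_r\).
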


\begin{proof}
  For \(H_{3}\), Proposition~\ref{pro:rel} gives a lower-term relation preserved by localization. As explained in \cite[\S 3.6]{EW23}, this supplies the missing
rank-three input. The remaining arguments of \cite[Theorem 1.6]{EW23} then apply.  Linear independence of double leaves follows as explained in \cite[\S 5.2]{EW23}. Assuming the Demazure surjectivity the spanning argument of \cite{EW16} applies again under the hypotheses of \cite[\S 5.2]{EW23}. If \(\bk\) is complete local, the final statement is \cite[Corollary 6.26]{EW16}
\end{proof}

\begin{corollary}[Soergel bimodules]
  Assume further that \(\bk\) is a field and that \(\ch\) is a Soergel realization satisfying the hypotheses of the comparison theorem of \cite[Theorem 6.30]{EW16}. Then the standard realization functor induces equivalences
  \[\Hec_{BS}(W) \xrightarrow{\sim} \operatorname{BSBim}, \qquad \Hec(W) \xrightarrow{\sim} \mathbb{S}\operatorname{Bim}.\]
\end{corollary}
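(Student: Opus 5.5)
The plan is to check the hypotheses of the comparison argument of \cite[Theorem 6.30]{EW16}, which compares diagrammatic and algebraic Bott--Samelson categories. The only place that argument depended on the missing \(H_3\) relation was the existence of a well-defined functor on the full presentation. So the work is: (i) construct the realization functor \(\CF_W\colon \Hec_{BS}(W)\to \operatorname{BSBim}\); (ii) show it is full and faithful on Bott--Samelson objects; (iii) pass to Karoubi envelopes.

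\textbf{Step 1: the functor exists.} Take the standard assignment \(\widetilde{\CF}_W\) on generators (polynomials, dots, trivalent vertices, \(2m_{st}\)-valent vertices) to morphisms of Bott--Samelson bimodules. The one-color and two-color relations, and the three-color relations of types \(A_3,B_3,I_2(m)\times A_1\), hold in \(\operatorname{BSBim}\) by \cite{EW16}. For each \(H_3\)-parabolic \(W_J\) we must show \(\widetilde{\CF}_W(P_L-P_R-Z^{\ch_J}_{H_3})=0\). I would argue exactly as in the Corollary following Theorem~\ref{th:rel}, but over \(\bk\):
\begin{itemize}
\item After \(-\otimes_R Q\), the diagram~\eqref{eq:comm-sq} for \(W\) (with \(\Lambda_W\) from the preceding theorem) identifies the image of this difference with \(\Lambda_W(P_L-P_R-Z^{\ch_J}_{H_3})\).
\item This vanishes by Proposition~\ref{pro:rel}. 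The relation lives in the \(W_J\)-part, and localization is compatible with restriction to the parabolic.
\item To conclude vanishing before localization, note that Hom spaces between Bott--Samelson bimodules of a Soergel realization over a field are free (graded) \(R\)-modules, by Soergel's Hom formula or \cite{Lib15}. Hence they embed into their localizations.
\end{itemize}
So \(\widetilde{\CF}_W\) descends to \(\CF_W\).

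\textbf{Step 2: full faithfulness on \(\Hec_{BS}(W)\).} By part~(2) of the preceding theorem (a field is Demazure surjective for Soergel realizations in the relevant characteristics), double leaves form an \(R\)-basis of \(\Hom_{\Hec_{BS}}(\expr{x},\expr{y})\). Their images under \(\CF_W\) are Libedinsky's double leaves, which form an \(R\)-basis of \(\Hom_{\operatorname{BSBim}}(B_{\expr{x}},B_{\expr{y}})\) for a Soergel realization. Since \(\CF_W\) sends one basis bijectively onto the other, it is an isomorphism on each Hom space. It is also essentially surjective on Bott--Samelson objects by definition. This gives the first equivalence.

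\textbf{Step 3: Karoubi envelopes.} An equivalence of additive graded categories induces an equivalence of their graded Karoubi envelopes. By Soergel's theory, \(\mathbb{S}\operatorname{Bim}\) is the graded Karoubi envelope of \(\operatorname{BSBim}\), consisting of summands of shifts of sums of Bott--Samelson bimodules. Hence \(\Hec(W)\simeq\mathbb{S}\operatorname{Bim}\).

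\textbf{Main obstacle.} I expect the delicate point to be Step 1's passage from \(\ch\) to its parabolic restriction \(\ch_J\). One must check that the normalization of Lemma~\ref{lem:bal} is compatible with the bimodule realization, that is, that the rescaling \(\beta_r=\lambda_r\alpha_r\) corresponds to rescaling the dot and trivalent maps of \(\operatorname{BSBim}\). This requires that the functor of \cite[\S 6.30]{EW16} is defined on the balanced presentation, which I would justify by the hypotheses of \cite[Theorem 6.30]{EW16}.
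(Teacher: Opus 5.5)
Your proposal is correct and follows the route the paper intends. The paper gives no separate proof of this corollary and relies on the preceding categorification theorem together with \cite[Theorem 6.30]{EW16}: the new relation is checked in $\operatorname{BSBim}$ by the same localization-plus-torsion-freeness argument as the corollary after Theorem~\ref{th:rel}, and full faithfulness comes from the diagrammatic and Libedinsky double-leaf bases corresponding. The rescaling compatibility you flag as the main obstacle does not need to be reproved here, since it is already part of Proposition~\ref{pro:rel} and the definition of $Z^{\ch_J}_{H_3}$, which you may take as input.
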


\begin{remark}
  We have stated the results above for balanced realizations in order to avoid the additional notation. The corresponding statements for unbalanced realizations can be obtained by making the modifications of \cite[\S7]{EW23}.
\end{remark}

\appendix
\section{Certificate}\label{app:cert}

The certificate is a plain-text encoding of \(Z_{H_{3}}\) together with the auxiliary data, which is used by the verifier. Here we describe each of its blocks.

\subsection{Expressions and rex paths}
The records \texttt{SOURCE} and \texttt{TARGET} contain fixed expressions \(\expr{x}\) and \(\expr{y}\). The records \texttt{MOVE L} and \texttt{MOVE R} contain the two rex paths. For example the record

\[\texttt{MOVE L tu 6}\]
means that in the current expression of the left rex path, we apply the
\(\bluet\greenu\)-braid move starting at the sixth position:
\[
\greenu\bluet\reds\bluet\reds\,
\underline{\greenu\bluet\greenu}\,
\reds\bluet\greenu\reds\bluet\reds\bluet
\ \longrightarrow\
\greenu\bluet\reds\bluet\reds\,
\underline{\bluet\greenu\bluet}\,
\reds\bluet\greenu\reds\bluet\reds\bluet.
\]
Here \(m_{\bluet\greenu}=3\), so the indicated rex move replaces \(\greenu\bluet\greenu\) by \(\bluet\greenu\bluet\).
\begin{figure}[h!]
  \centering \includegraphics[width=\textwidth]{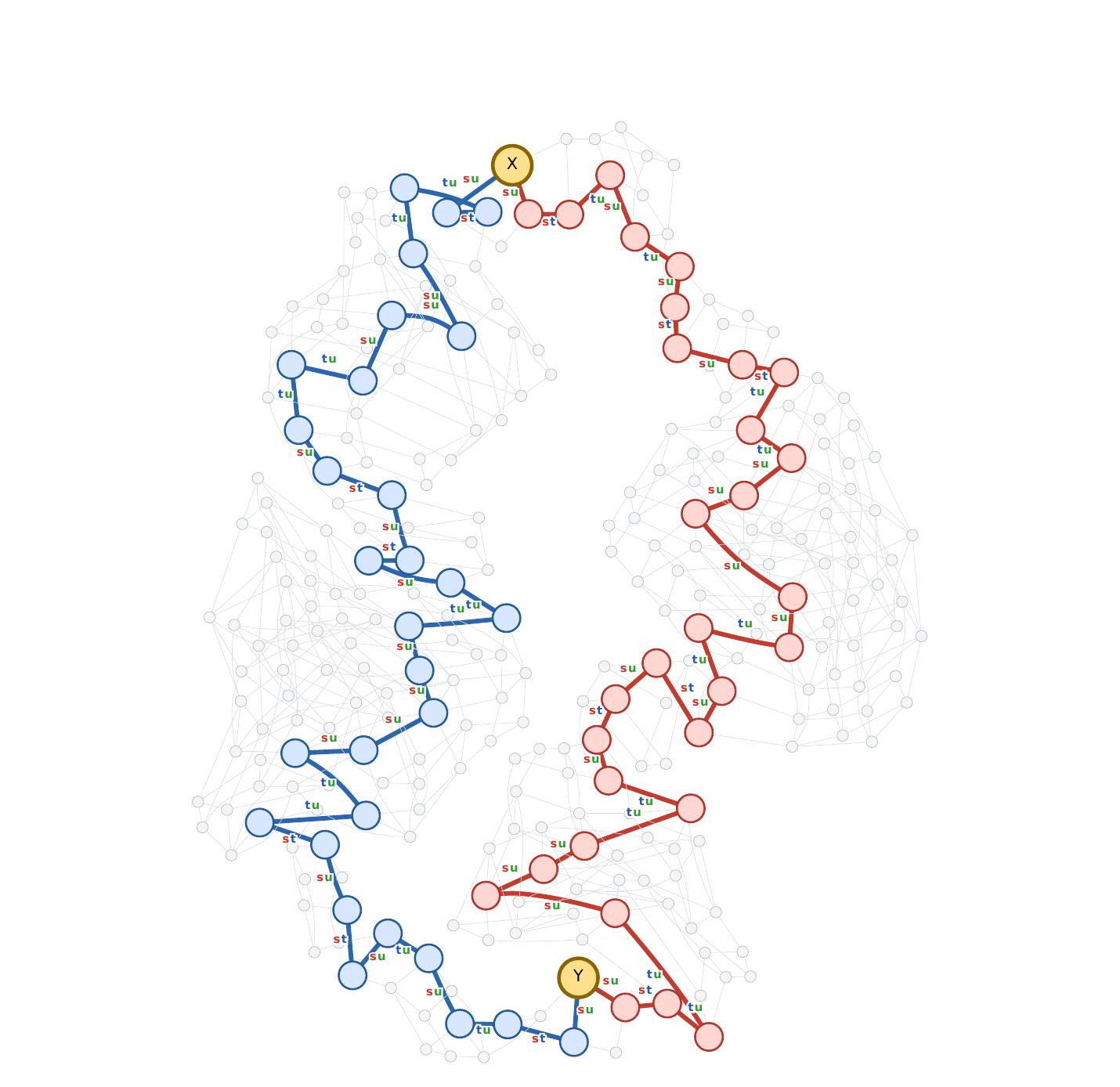}
  \caption{Two paths in the rex graph of \(w_0\in H_{3}\) connecting the expressions
    \(\expr{x}\) and \(\expr{y}\).}
  \label{fig:H3-rex-paths}
\end{figure}

\subsection{Points}
The next block is the list of points used for the interpolation. A record
\[\texttt{POINT k p\_s p\_t p\_u }\]
stores a point for the layer of endpoint length \(k\). For each layer except \(k = 11\), the final point is an additional verification point and is not used for interpolation. 

\subsection{Coefficients}
A coefficient record has the form
\[\texttt{C e f a b c m n}.\]
The integers \texttt{e} and \texttt{f} are bit masks for subexpressions of \(x\) and \(y\): the least significant bit controls the first leftmost letter of the word, and bit \(i\) controls the \(i+1\) letter. The record contributes
\[(m + n \varphi)\alpha_{s}^{a}\alpha_{t}^{b}\alpha_{u}^{c}\]
to the coefficient of \(\LL_{\expr{e},\expr{f}}\). Zero monomials and zero polynomial coefficients are omitted.

\begin{example}
  The block of records
\[
\begin{array}{l}
\texttt{C 12398 16435 0 0 1 1 2}\\
\texttt{C 12398 16435 0 1 0 -10 -17}\\
\texttt{C 12398 16435 1 0 0 -20 -29}
\end{array}
\]
encodes the following decorated double leaf.

\medskip
\begin{minipage}[c]{0.5\textwidth}

For the source expression
\[
\expr{x}=\greenu\bluet\reds\bluet\greenu\reds\bluet\reds\bluet\greenu\reds\bluet\reds\bluet\reds,
\]
the mask is
\[
  12398=(011000001101110)_2,
\]
\[
\expr{e}
=
(0,1,1,1,0,1,1,0,0,0,0,0,1,1,0).
\]
Here the binary expansion is written in the usual order, while
\(\expr{e}\) is written in expression order, with the least significant
bit corresponding to the first letter of \(\expr{x}\).
Thus the selected positions are
\[
2,3,4,6,7,13,14,
\]
giving
\[
\expr{x}^{\expr{e}}
=
\bluet\reds\bluet\reds\bluet\reds\bluet
=
\reds\bluet\reds.
\]

For the target expression
\[
\expr{y}=\reds\bluet\reds\bluet\reds\greenu\bluet\reds\bluet\reds\greenu\bluet\reds\bluet\greenu,
\]
we have
\[
16435=(100000000110011)_2,
\]
\[
\expr{f}
=
(1,1,0,0,1,1,0,0,0,0,0,0,0,0,1).
\]
Thus the selected positions are
\[
1,2,5,6,15,
\]
and
\[
\expr{y}^{\expr{f}}
=
\reds\bluet\reds\greenu\greenu= \reds\bluet\reds
\]
as well. 
\end{minipage}
\hfill
\begin{minipage}[c]{0.50\textwidth}
\centering
\exampleDecoratedDoubleLeafPicture
\end{minipage}
\end{example}

The certificate contains \(54\,452\) coefficient records.  After collecting
records with the same pair of masks, they give \(43\,424\) nonzero polynomial
coefficients.  Their distribution by intermediate endpoint length is shown in
Table~\ref{tab:certificate-layers}.

\begin{table}[H]
\centering
\begin{tabular}{c@{\qquad}r@{\qquad}r}
\toprule
\(\ell(u)\)
& nonzero polynomial coefficients
& monomial records\\
\midrule
11 & 24 & 24\\
10 & 213 & 231\\
9  & 397 & 409\\
8  & 2\,283 & 2\,636\\
7  & 4\,486 & 5\,193\\
6  & 7\,513 & 9\,761\\
5  & 15\,084 & 20\,961\\
4  & 8\,036 & 9\,476\\
3  & 3\,123 & 3\,404\\
2  & 2\,103 & 2\,195\\
1  & 162 & 162\\
\midrule
\textbf{Total} & \textbf{43\,424} & \textbf{54\,452}\\
\bottomrule
\end{tabular}
\caption{Nonzero coefficients in the certificate, grouped by the length of
the intermediate endpoint of the double leaf.}
\label{tab:certificate-layers}
\end{table}

\section{Light leaves construction}\label{app:ll}

In this section we describe the choices used to construct the double-leaf basis in the computation. This allows one to use the certificate whenever the same choices are made.

There are three of them to make: reduced expressions, expressions for the right descents, and rex paths. We fix the order \(\reds < \bluet < \greenu\) on generators and the corresponding lexicographic order on words.

\begin{enumerate}
\item \textit{Reduced expressions.}
For each \(w \in H_{3}\), we choose the first expression found by breadth-first search from the identity, using right multiplication by \(s,t,u\) in this order.
Since the search visits elements in increasing distance from the identity, the chosen expression is reduced.

\item \textit{Descent expressions.}
  For each \(w \in H_{3}\), and each right descent \(r\), we choose the lexicographically smallest reduced expression of \(w\) ending in \(r\).

\item \textit{Rex paths.}
  The procedure for the rex paths is the following. For a pair of expressions \(u,v\) where \(u\) is the source and \(v\) is the target we always choose the shortest path. We first compute the distance from every vertex \(u'\) to \(v\) by breadth-first search starting at the target. Then, starting at the source, we repeatedly take a braid move that decreases the distance by one. If there are several of them, we choose the smallest expression in lexicographic order.
\end{enumerate}
From this point on, the algorithm is standard and follows \cite{EW16} precisely.

\bibliographystyle{amsalpha}
\bibliography{references}

\end{document}